\documentclass[12pt]{amsart}
\usepackage{amssymb}
\usepackage[shortlabels]{enumitem}
\usepackage[all]{xy}
\usepackage{xcolor}
\usepackage{graphicx}
\usepackage[margin=1in]{geometry} 
\usepackage{hyperref}
\hypersetup{bookmarksdepth=2}
\hypersetup{colorlinks=true}
\hypersetup{linkcolor=blue}
\hypersetup{citecolor=blue}
\hypersetup{urlcolor=blue}

\setcounter{tocdepth}{1}

\numberwithin{equation}{section}
\newtheorem{theorem}[equation]{Theorem}

\newtheorem{proposition}[equation]{Proposition}
\newtheorem{lemma}[equation]{Lemma}

\theoremstyle{definition}
\newtheorem{remark}[equation]{Remark}
\newtheorem{definition}[equation]{Definition}

\newcommand{\arxiv}[1]{\href{http://arxiv.org/abs/#1}{{\tiny\tt arXiv:#1}}}
\newcommand{\DOI}[1]{\href{http://doi.org/#1}{\color{purple}{\tiny\tt DOI:#1}}}
\newcommand{\defn}[1]{\emph{#1}}

\newcommand{\bC}{\mathbf{C}}
\newcommand{\bF}{\mathbf{F}}
\newcommand{\bZ}{\mathbf{Z}}
\newcommand{\FF}{\mathbb{F}}
\newcommand{\JJ}{\mathbb{J}}
\newcommand{\MM}{\mathbb{M}}

\title{A characterization of Jacobi sums}

\author{Andrew Snowden}
\thanks{The author was supported by NSF grant DMS-2301871.}
\address{Department of Mathematics, University of Michigan, Ann Arbor, MI, USA}
\email{\href{mailto:asnowden@umich.edu}{asnowden@umich.edu}}
\urladdr{\url{http://www-personal.umich.edu/~asnowden/}}

\date{November 22, 2024}

\begin{document}

\begin{abstract}
Let $\MM$ be the group of multiplicative characters of a finite field $\FF$, and let $\JJ(\alpha, \beta)$ be the Jacobi sum, for $\alpha, \beta \in \MM$. We observe that the function $\JJ \colon \MM \times \MM \to \bC$ satisfies three elementary properties. We show that these properties (very nearly) characterize Jacobi sums: if $M$ is an arbitrary non-trivial finite abelian group and $J \colon M \times M \to \bC$ is a function satisfying these properties then $M$ is naturally the group of multiplicative characters of a finite field and $J$ is the Jacobi sum.
\end{abstract}

\maketitle
\tableofcontents

\section{Introduction}

\subsection{Jacobi sums}

Let $\FF$ be a finite field with $q$ elements and let $\MM$ be the group of complex valued characters of the multiplicative group $\FF^*$. For $\alpha, \beta \in \MM$, we define the \defn{Jacobi sum} by
\begin{displaymath}
\JJ(\alpha, \beta) = \frac{1}{q-1} \sum_{\substack{x+y=1, \\ x,y \in \FF^*}} \alpha(x) \beta(y).
\end{displaymath}
We note that many authors modify the definition when one or both characters are trivial, and also do not divide by $q-1$. Jacobi sums satisfy the following identities (see \S \ref{s:ident}):
\begin{itemize}
\item[(A)] We have $\JJ(\alpha, \beta)=\JJ(\beta, \alpha)$ for all $\alpha, \beta \in \MM$.
\item[(B)] Letting $\JJ^*(\alpha, \beta)=-\delta(\alpha)-\delta(\beta)+\JJ(\alpha,\beta)$, we have
\begin{displaymath}
\JJ^*(\alpha, \beta) \JJ^*(\alpha \beta, \gamma) = \JJ^*(\alpha, \beta \gamma) \JJ^*(\beta, \gamma).
\end{displaymath}
for all $\alpha, \beta, \gamma \in \MM$. Here $\delta(\alpha)$ is~1 if $\alpha$ is the trivial character, and~0 otherwise.
\item[(C)] For all $\alpha_1, \alpha_2, \alpha_3, \alpha_4 \in \MM$, we have
\begin{displaymath}
\sum_{\beta \in \MM} \JJ(\alpha_1 \beta, \alpha_2 \beta^{-1}) \JJ(\alpha_3 \beta, \alpha_4 \beta^{-1}) = \JJ(\alpha_1 \alpha_4, \alpha_2 \alpha_3).
\end{displaymath}
\end{itemize}
Our goal is to show that these properties (very nearly) characterize Jacobi sums.

\subsection{The main theorem}

To formulate our result, it will be convenient to introduce the following piece of  terminology:

\begin{definition}
A \defn{Jacobi function} on a finite abelian group $M$ is a function
\begin{displaymath}
J \colon M \times M \to \bC
\end{displaymath}
satisfying the analogs of (A), (B), and (C).
\end{definition}

Of course, the main example of a Jacobi function is a Jacobi sum $\JJ$ associated to a finite field; we refer to this as the ``classical case.'' In what follows, we let $\hat{M}$ denote the Pontryagin dual of a finite abelian group $M$, and we write $\alpha(x)$ for the pairing between $\alpha \in M$ and $x \in \hat{M}$. We are now ready to state our main theorem:

\begin{theorem}
Let $M$ be a finite abelian group of order $m \ge 2$, and let $J$ be a Jacobi function on $M$. Then there is a unique field structure on the set $F=\hat{M} \sqcup \{0\}$, where~0 is the additive identity and the multiplication extends the group law on $\hat{M}$, such that
\begin{displaymath}
J(\alpha, \beta) = \frac{1}{m} \sum_{\substack{x+y=1,\\ x,y \in F \setminus \{0\}}} \alpha(x) \beta(y).
\end{displaymath}
\end{theorem}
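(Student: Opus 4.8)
The plan is to reconstruct the addition law on $F$ by Fourier inversion and to read the field axioms off from properties (A)--(C). Write $1 \in \hat{M}$ for the trivial character (the prospective multiplicative identity), and set
\[
N(a,b) = \frac{1}{m} \sum_{\alpha, \beta \in M} \overline{\alpha(a)}\,\overline{\beta(b)}\, J(\alpha,\beta), \qquad a,b \in \hat{M}.
\]
By orthogonality of characters this is equivalent to the inversion formula $J(\alpha,\beta) = \frac{1}{m} \sum_{a,b \in \hat{M}} N(a,b)\,\alpha(a)\beta(b)$, so the theorem is exactly the assertion that $N$ is the indicator function of the set $\{(x,y) : x+y=1\}$ for a suitable field structure. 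It therefore suffices to produce a field structure on $F$ whose multiplication is the given one on $\hat{M}$ (with $0$ absorbing) and for which $S := \{(a,b) \in \hat{M}^2 : N(a,b)=1\}$ is the ``sum equals $1$'' relation. Uniqueness is then automatic, since the formula forces $N$, hence $S$, hence (the multiplication being fixed) the addition.

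First I would translate the three properties into properties of $N$. Property (A) gives $N(a,b)=N(b,a)$. Expanding property (C) by the inversion formula and collapsing the sum over $\beta$ by orthogonality yields, after comparing coefficients, the pointwise identity
\[
N(a,b)\,N(c,d) = N(a,b)\,[c=b]\,[d=a] \qquad \text{whenever } ac=bd,
\]
where $[\,\cdot\,]$ is the Kronecker bracket. Specializing $(c,d)=(b,a)$ and using (A) gives $N(a,b)^2 = N(a,b)$, so $N$ is $\{0,1\}$-valued; and in general the identity says that for each $s \in \hat{M}$ the ``ratio line'' $M_s = \{(u,v) : v=su\}$ meets $S$ in at most one point. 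This single fact already makes a candidate addition well defined: as $c$ ranges over $\hat{M}$ the pairs $(a/c,b/c)$ all have ratio $b/a$ and sweep out all of $M_{b/a}$, so there is at most one $c \in \hat{M}$ with $(a/c,b/c) \in S$, and I define $a+b$ to be that $c$ when it exists.

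Next I would pin down the degenerate values using property (B). The relation for $J^*$ is a symmetric $2$-cocycle identity, and its structure at trivial arguments, combined with (C), should determine $J(\alpha,1)$ and $J(1,1)$; equivalently one extracts a Gauss-sum function $G \colon M \to \bC^*$ with $J^*(\alpha,\beta) = G(\alpha)G(\beta)/G(\alpha\beta)$, using that $\mathrm{Ext}^1_{\bZ}(M,\bC^*)=0$. Fed back through the inversion formula, these values compute the row sums $\sum_b N(a,b)$, showing they equal $1$ for $a\neq 1$ and $0$ for $a=1$. Hence $|S|=m-1$, every row and column of $S$ is a singleton, and $S$ is the graph of an involution $\tau$ on $\hat{M}\setminus\{1\}$. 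Since the ratio map $S \to \hat{M}$ is injective with image of size $m-1$ and closed under inversion (by symmetry), exactly one ratio $\iota$ is missing and satisfies $\iota^2=1$; setting $-1 := \iota$ supplies additive inverses via $a + (\iota a) := 0$, and $0 + x := x$ completes the definition of $+$. Commutativity then follows from the symmetry of $S$, the identity and inverse axioms from the construction, and distributivity $a(b+c)=ab+ac$ from the scale-invariance of the defining condition $(b/d,c/d)\in S$.

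The main obstacle is associativity: nothing formal guarantees $(a+b)+c=a+(b+c)$, and this is where the full rigidity of (C) must be spent. I would prove it by exploiting the ``at most one point per ratio line'' property together with the cocycle data from (B): one reduces associativity---via distributivity and the separate $0$-cases---to a functional equation for the single map $x \mapsto 1+x$, and verifies that equation by applying the displayed identity to carefully chosen quadruples, the injectivity of the ratio map forcing the two sides to coincide. Once associativity is in hand, $F$ is a field; consequently $\hat{M}\cong F^*$ is cyclic and $m+1$ is a prime power (both emerging as conclusions rather than hypotheses), and the desired formula for $J$ holds by Fourier inversion. I expect this associativity step to be the technical heart of the argument, with the Fourier reduction, the $\{0,1\}$-dichotomy, and the remaining axioms comparatively routine.
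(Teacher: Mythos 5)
Your overall strategy---take the inverse Fourier transform $N$ of $J$, show via (C) that each ratio line meets its support at most once, and use this to define the addition---is the same as the paper's (the paper packages the ratio-line statement as $\hat{Q}_\alpha \hat{Q}_\beta = \hat{Q}_{\alpha\beta}$ for $Q_\alpha(\beta)=J(\alpha\beta^{-1},\beta)$, and your coefficient-comparison identity $N(a,b)N(c,d)=N(a,b)[c=b][d=a]$ for $ac=bd$ is a correct equivalent). But there are two genuine gaps. First, the claim that the row sums $\sum_b N(a,b)$ vanish at $a=1$ is not something (B) delivers. After proving $J^*(1,\alpha)\neq 0$ for $\alpha\neq 1$ (which you would also need), condition (B) only yields that $J^*(1,\alpha)$ is independent of $\alpha$, hence that the set $T$ of first coordinates of the support contains every nontrivial element of $\hat{M}$; whether it also contains $1$ is exactly the unresolved dichotomy, corresponding to $J(1,1)=1-1/m$ versus $J(1,1)=1$. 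Both options survive everything you have established at that point (the second genuinely occurs, for the boolean semiring, when $m=1$), and ruling it out for $m\ge 2$ costs the paper an entire separate argument (\S 6): one builds the addition on all of $\hat{M}$ with no zero adjoined, proves it is cancellative, commutative, associative and distributive, locates an identity element, and concludes $\hat{M}$ is a singleton. Your appeal to a coboundary $J^*(\alpha,\beta)=G(\alpha)G(\beta)/G(\alpha\beta)$ also presupposes that $J^*$ is nowhere zero, which is unproved, and in any case does not by itself fix the value $J(1,1)$.

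Second, the associativity step, which you rightly flag as the heart, is where your sketch lacks the needed idea. Conditions (A) and (C) alone impose no associativity constraint: for \emph{any} inversion-closed $S\subset\hat{M}$ with $i(x)=xi(x^{-1})$, the formula $J(\alpha,\beta)=\frac{1}{m}\sum_{x\in S}\alpha(i(x))\beta(i(x)x^{-1})$ defines a function satisfying (A) and (C), so no application of your displayed identity to well-chosen quadruples can prove associativity; condition (B) must enter in an essential way. The paper's mechanism is to run the classical verification of (B) for Jacobi sums in reverse: using distributivity and the computed values $J(\alpha,\alpha^{-1})$, the reformulated version of (B) is shown to be precisely the statement that $\sum_{(x\oplus y)\oplus z=1}\alpha(x)\beta(y)\gamma(z)=\sum_{x\oplus(y\oplus z)=1}\alpha(x)\beta(y)\gamma(z)$ for all $\alpha,\beta,\gamma$, whence the two solution sets coincide by Fourier inversion, and associativity for outputs other than $1$ follows by scaling, with the output $0$ handled by elimination. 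If you want to complete your plan you need to make this triple-sum computation (or a genuine substitute) explicit; ``a functional equation for $x\mapsto 1+x$ verified on carefully chosen quadruples'' is not yet an argument.
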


We say a little about the proof. Let us first return to the classical case, where $J=\JJ$ is the Jacobi sum for a finite field $\FF$. Let $X \subset (\FF^*)^2$ be the set of pairs $(x,y)$ where $x+y=1$, and identify $\FF^*$ with $\hat{\MM}$. The function $\JJ$ is exactly the Fourier transform of the indicator function of the set $X$. Thus, given $\JJ$, we can ``discover'' $X$ by taking an inverse Fourier transform. With $X$ in hand, it is easy to reconstruct the addition law on $\FF$. This is how the proof works in general: given an arbitrary Jacobi function $J$, we show that its inverse Fourier transform is the indicator function of a set, and use this set to construct the addition law on $F$.

\subsection{Remarks} \label{ss:rmk}

We make a number of remarks concerning the theorem.

(a) Suppose $M$ is the trivial group. A Jacobi function $J$ is then determined by the single value $a=J(1,1)$. Conditions (A) and (B) are trivial, while (C) amounts to the equation $a^2=a$. We thus find two solutions to the equations, namely, $a=0$ and $a=1$. The $a=0$ case is the Jacobi sum for the finite field $\bF_2$. The $a=1$ case does not come from a finite field structure, but it can be viewed as the ``Jacobi sum'' for the boolean semi-ring.

(b) Given a Jacobi function $J$ on $M$, one can obtain new Jacobi functions on $M$ by pre-composing $J$ with a group automorphism of $M$, or post-composing $J$ with a Galois automorphism of $\bC$. We now explain how these operations work in the classical situation. Thus suppose $M=\MM$ and $J=\JJ$ for some finite field $\FF$ of cardinality $q$.

Let $r$ be an integer coprime to $q-1$ so that $\alpha \mapsto \alpha^r$ is an automorphism of $\MM$. We thus have a Jacobi function $\JJ'$ on $\MM$ by $\JJ'(\alpha, \beta)=\JJ(\alpha^r, \beta^r)$. The map $x \mapsto x^r$ is a bijection $\FF \to \FF$ that is compatible with multiplication, and so we can define a new field structure on $\FF$ by conjugating addition by this map. The Jacobi function $\JJ'$ is the classical Jacobi sum with respect to this new field structure.

Next, suppose that $\sigma$ is a field automorphism of $\bC$. We thus obtain a new Jacobi function $\JJ''$ on $\MM$ by $\JJ''=\sigma \circ \JJ$. Now, there is then an integer $r$ coprime to $q-1$ such that $\sigma(\zeta)=\zeta^r$ for all $q-1$ roots of unity $\zeta \in \bC$. One easily sees that $\JJ''$ is exactly the Jacobi function $\JJ'$ defined above. Thus it too is the classical Jacobi sum for the new field structure on $\FF$.

(c) There are a number of possible generalizations that one can consider. (i) One can try to classify Jacobi functions valued in an arbitrary commutative ring. (ii) The beta function is often considered to be the real number analog of Jacobi sums. Is there a real number analog of our theorem? (iii) One can consider Jacobi sums where the finite field $\FF$ is replaced with a finite ring such as $\bZ/p^n \bZ$ (see, e.g., \cite[\S 5]{Nica}), or even a finite semi-ring. One can then attempt to characterize these. We will elaborate on this in \cite{jaccat}.

(d) In recent work with Harman \cite{repst}, we introduced a certain notion of measure on oligomorphic groups, and showed that they can be used to construct interesting new tensor categories. Given a finite abelian group $M$, there is an oligomorphic group $G=G(M)$ that is defined as the symmetry group of an $M$-weighted Cantor set. It turns out that measures for $G(M)$ correspond to Jacobi functions on $M$. This is how we were led to this particular set of conditions. We will discuss the details of this in forthcoming work \cite{jaccat}.

\section{Reformulation of the second condition}

Fix a finite abelian group $M$ and a function $J \colon M \times M \to \bC$. Condition (B) is formulated in terms of the auxiliary function $J^*$. We now give a direct formulation in terms of $J$ which, while more complicated, will be at times be more convenient.

\begin{proposition} \label{prop:reform}
Condition (B) is equivalent to the identity
\begin{displaymath}
\scalebox{.95}{$\displaystyle J(\alpha, \beta) J(\alpha \beta, \gamma)-J(\alpha, \alpha^{-1})  \delta(\alpha \beta)+\delta(\alpha) \delta(\beta)
= J(\alpha, \beta \gamma) J(\beta, \gamma)-J(\gamma, \gamma^{-1})  \delta(\beta \gamma)+\delta(\beta) \delta(\gamma).$}
\end{displaymath}
\end{proposition}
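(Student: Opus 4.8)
The statement is a purely algebraic identity, so the plan is to substitute the definition $J^*(\xi,\eta)=J(\xi,\eta)-\delta(\xi)-\delta(\eta)$ into the two products appearing in (B) and expand. Writing $P=J^*(\alpha,\beta)\,J^*(\alpha\beta,\gamma)$ and $Q=J^*(\alpha,\beta\gamma)\,J^*(\beta,\gamma)$, each expands into nine terms: one genuine product $J\cdot J$, four mixed terms $J\cdot\delta$, and four pure terms $\delta\cdot\delta$. I would first record the elementary simplification rules that reshape these. Since $\delta(\xi\eta)=1$ exactly when $\eta=\xi^{-1}$, the mixed terms carrying $\delta(\alpha\beta)$ or $\delta(\beta\gamma)$ collapse onto the diagonal, giving $J(\alpha,\beta)\delta(\alpha\beta)=J(\alpha,\alpha^{-1})\delta(\alpha\beta)$ and $J(\beta,\gamma)\delta(\beta\gamma)=J(\gamma,\gamma^{-1})\delta(\beta\gamma)$. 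Since $\delta(\xi)=1$ forces $\xi=1$, the mixed terms carrying a single-character delta simplify, for instance $\delta(\alpha)J(\alpha\beta,\gamma)=\delta(\alpha)J(\beta,\gamma)$, $\delta(\beta)J(\alpha\beta,\gamma)=\delta(\beta)J(\alpha,\gamma)$, and $\delta(\gamma)J(\alpha,\beta\gamma)=\delta(\gamma)J(\alpha,\beta)$. Finally the pure terms satisfy $\delta(\alpha)\delta(\alpha\beta)=\delta(\beta)\delta(\alpha\beta)=\delta(\alpha)\delta(\beta)$ and $\delta(\beta\gamma)\delta(\beta)=\delta(\beta\gamma)\delta(\gamma)=\delta(\beta)\delta(\gamma)$.

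The conceptual heart is then a single observation. After applying these rules, $P$ is the left-hand side of the displayed identity plus a remainder, and $Q$ is the right-hand side plus a remainder, where in both cases the remainder is the \emph{same} expression
\[
S=-\delta(\gamma)J(\alpha,\beta)-\delta(\alpha)J(\beta,\gamma)-\delta(\beta)J(\alpha,\gamma)+\delta(\alpha)\delta(\beta)+\delta(\beta)\delta(\gamma)+\delta(\gamma)\delta(\alpha).
\]
The point is that $S$ is manifestly invariant under every permutation of $\alpha,\beta,\gamma$ — each single-character delta is paired against the value of $J$ on the other two arguments, and the three $\delta\delta$ terms run over all unordered pairs — so the remainders produced by the two sides of (B) are literally identical. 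Consequently the relation $P=Q$ of condition (B) is equivalent to the equality of the two non-remainder parts, which is exactly the displayed identity.

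I expect the only real work to be bookkeeping: tracking all eighteen expanded terms and checking that, after simplification, the remainder really is the same symmetric expression $S$ on both sides. The one step that is not purely formal is the diagonal collapse of the mixed terms, where setting $\beta=\alpha^{-1}$ (resp.\ $\beta=\gamma^{-1}$) on the support of $\delta(\alpha\beta)$ (resp.\ $\delta(\beta\gamma)$) produces the diagonal values $J(\alpha,\alpha^{-1})$ and $J(\gamma,\gamma^{-1})$ recorded in the statement; this identification of the second term with the two arguments written in the displayed order is the only place warranting a careful check.
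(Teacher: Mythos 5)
Your proposal is correct and follows essentially the same route as the paper: expand $J^*(\alpha,\beta)J^*(\alpha\beta,\gamma)$ and $J^*(\alpha,\beta\gamma)J^*(\beta,\gamma)$, simplify the mixed and pure delta terms via the tautological collapse rules, and observe that each product equals the corresponding side of the displayed identity plus one and the same remainder $S$. Your remark that $S$ is symmetric in $\alpha,\beta,\gamma$ is a nice way of packaging the paper's observation that $A^*-A=B^*-B$, but the computation is identical.
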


\begin{proof}
Let $A$ and $B$ be the two sides of the above equation, and put
\begin{displaymath}
A^* = J^*(\alpha, \beta) J^*(\alpha \beta, \gamma), \qquad
B^* = J^*(\alpha, \beta \gamma) J^*(\beta, \gamma).
\end{displaymath}
By definition of $J^*$, we have
\begin{displaymath}
A^* = (J(\alpha, \beta) - \delta(\alpha) - \delta(\beta))(J(\alpha \beta, \gamma) - \delta(\alpha \beta) - \delta(\gamma))
\end{displaymath}
We now expand the product to obtain
\begin{align*}
A^* =& J(\alpha, \beta) J(\alpha \beta, \gamma) - J(\beta, \gamma) \delta(\alpha) - J(\alpha, \gamma) \delta(\beta) -J(\alpha, \beta) \delta(\gamma) - J(\alpha, \alpha^{-1}) \delta(\alpha \beta) \\
& + 2\delta(\alpha) \delta(\beta) + \delta(\alpha) \delta(\gamma) + \delta(\beta) \delta(\gamma).
\end{align*}
Here we have made use of several tautological identities like $J(\alpha \beta, \gamma) \delta(\beta) = J(\alpha, \gamma) \delta(\beta)$. A similar procedure yields
\begin{align*}
B^* =& J(\alpha, \beta \gamma) J(\beta, \gamma) - J(\beta, \gamma) \delta(\alpha) - J(\alpha, \gamma) \delta(\beta) -J(\alpha, \beta) \delta(\gamma) - J(\gamma, \gamma^{-1}) \delta(\beta \gamma) \\
& + \delta(\alpha) \delta(\beta) + \delta(\alpha) \delta(\gamma) + 2\delta(\beta) \delta(\gamma).
\end{align*}
We thus see that $A^*-A=B^*-B$, and so $A=B$ is equivalent to $A^*=B^*$.
\end{proof}

\section{Properties of Jacobi sums} \label{s:ident}

Fix a finite field $\FF$. We now verify that the Jacobi sum $\JJ$ actually satisfies the conditions (A), (B), and (C). The first follows immediately from the definition.

\begin{proposition} \label{prop:jacobi2}
$\JJ$ satisfies (B).
\end{proposition}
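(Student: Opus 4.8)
The cleanest route is to express $\JJ^*$ in terms of Gauss sums and thereby reduce condition~(B) to a formal telescoping identity. Recall that for a fixed nontrivial additive character $\psi$ of $\FF$ the Gauss sum is $g(\alpha) = \sum_{t \in \FF^*} \alpha(t) \psi(t)$. The plan is to produce a single function $G \colon \MM \to \bC$ with the property that
\[
\JJ^*(\alpha, \beta) = \frac{G(\alpha) G(\beta)}{G(\alpha \beta)} \qquad \text{for all } \alpha, \beta \in \MM.
\]
Granting this, condition~(B) is immediate: both $\JJ^*(\alpha,\beta)\JJ^*(\alpha\beta,\gamma)$ and $\JJ^*(\alpha,\beta\gamma)\JJ^*(\beta,\gamma)$ collapse to the common value $G(\alpha)G(\beta)G(\gamma)/G(\alpha\beta\gamma)$ once the intermediate factors cancel.

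To find $G$, I would first recall the two standard relations between Jacobi and Gauss sums. Writing $J_0 = (q-1)\JJ$ for the unnormalized Jacobi sum, these are $g(\alpha) g(\beta) = J_0(\alpha, \beta) g(\alpha\beta)$ whenever $\alpha$, $\beta$, and $\alpha\beta$ are all nontrivial, and $g(\alpha) g(\alpha^{-1}) = \alpha(-1) q$ for $\alpha$ nontrivial. The first suggests setting $G(\alpha) = g(\alpha)/(q-1)$ for nontrivial $\alpha$, which indeed yields $\JJ^*(\alpha,\beta) = \JJ(\alpha,\beta) = G(\alpha)G(\beta)/G(\alpha\beta)$ in the generic case. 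The trivial character must be assigned a value by hand (note $g(1) = -1$, so the generic formula does not apply there); a short computation of $\JJ^*(1,\beta)$ for nontrivial $\beta$, using $J_0(\alpha,1) = -1$, forces the choice $G(1) = -q/(q-1)$.

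It then remains to verify the displayed factorization in the degenerate cases, and this case analysis is the only real work. Beyond the generic case there are three: $\alpha\beta = 1$ with $\alpha,\beta$ nontrivial, where the relation $g(\alpha)g(\alpha^{-1}) = \alpha(-1)q$ together with $J_0(\alpha,\alpha^{-1}) = -\alpha(-1)$ must reproduce $G(\alpha)G(\alpha^{-1})/G(1)$; the case where exactly one of $\alpha,\beta$ is trivial; and the case $\alpha = \beta = 1$, where $J_0(1,1) = q-2$ must reconcile with $G(1)$. Each reduces to an elementary manipulation of the two Gauss-sum relations and the value $G(1) = -q/(q-1)$. The main obstacle is purely bookkeeping: ensuring that the single special value $G(1)$ is simultaneously consistent across all boundary cases, and that the choice of $\psi$ is immaterial, which holds because rescaling $\psi$ multiplies each $g(\alpha)$ by a character value that cancels in the ratio $G(\alpha)G(\beta)/G(\alpha\beta)$.

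As an alternative I would keep in mind the option of bypassing Gauss sums entirely and verifying the equivalent polynomial identity of Proposition~\ref{prop:reform} by expanding both sides directly as character sums over $\FF^*$; but the Gauss-sum factorization makes the underlying cocycle structure transparent, and is the approach I would favor.
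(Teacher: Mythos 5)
Your proof is correct, but it takes a genuinely different route from the paper's. You factor $\JJ^*$ as a coboundary, $\JJ^*(\alpha,\beta) = G(\alpha)G(\beta)/G(\alpha\beta)$ with $G(\alpha) = g(\alpha)/(q-1)$ for nontrivial $\alpha$ and $G(1) = -q/(q-1)$, after which (B) is a formal telescoping identity; I checked the boundary cases you list ($\JJ^*(1,\beta) = \JJ^*(1,1) = -q/(q-1)$ and $\JJ^*(\alpha,\alpha^{-1}) = -\alpha(-1)/(q-1)$ against $G(\alpha)G(\alpha^{-1})/G(1)$), and they all come out consistent with the single value $G(1) = -q/(q-1)$, so the bookkeeping you defer does close up. The paper instead avoids Gauss sums entirely: it introduces the triple sum $S = (q-1)^{-2}\sum_{x+y+z=1}\alpha(x)\beta(y)\gamma(z)$, evaluates it two ways by grouping $x+y$ first or $y+z$ first, and identifies the result with the two sides of the reformulated identity in Proposition~\ref{prop:reform}. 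Your approach makes the cocycle structure of (B) transparent and is arguably slicker, at the cost of importing the Gauss--Jacobi relations and a four-way case split. The paper's approach buys something specific to this article: it exhibits (B) as the shadow of associativity of addition in $\FF$, and it is precisely this computation that is run in reverse in Lemma~\ref{lem:a3} to recover associativity of $\oplus$ from (B) in the converse direction, which is the heart of the main theorem.
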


\begin{proof}
Let $\alpha, \beta, \gamma \in \MM$ be given, and consider the sum
\begin{displaymath}
S = \frac{1}{(q-1)^2} \sum_{\substack{x+y+z=1,\\ x,y,z \in \FF^*}} \alpha(x) \beta(y) \gamma(z).
\end{displaymath}
We can rewrite this as
\begin{displaymath}
S = \frac{1}{q-1} \sum_{\substack{w+z=1,\\ w \in \FF, z \in \FF^*}} S(w) \gamma(z), \qquad
S(w) = \frac{1}{q-1} \sum_{\substack{x+y=w, \\ x,y \in \FF^*}} \alpha(x) \beta(y)
\end{displaymath}
If $w \ne 0$ then a change of variables gives $S(w)=(\alpha \beta)(w) \JJ(\alpha, \beta)$. We also have $S(0)=\alpha(-1) \delta(\alpha \beta)$. We therefore find
\begin{align*}
S
&= \JJ(\alpha, \beta) \JJ(\alpha \beta, \gamma) + \frac{\alpha(-1) \delta(\alpha \beta)}{q-1} \\
&= \JJ(\alpha, \beta) \JJ(\alpha \beta, \gamma) - \JJ(\alpha, \alpha^{-1}) \delta(\alpha \beta) + \delta(\alpha) \delta(\beta).
\end{align*}
In the second step, we used the computation of $\JJ(\alpha, \alpha^{-1})$ \cite[\S 8.3]{IR}. On the other hand, we can carry out a similar computation where the outer sum is over $x+w=1$ and the inner sum $S(w)$ is over $y+z=w$. This gives the same result, except with $\alpha$ and $\gamma$ interchanged. We thus see that (B) holds by Proposition~\ref{prop:reform}.
\end{proof}

\begin{proposition} \label{prop:jacobi3}
$\JJ$ satisfies (C).
\end{proposition}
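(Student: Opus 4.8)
The plan is to expand both Jacobi sums from the definition, carry out the sum over $\beta$ using orthogonality of characters, and then check that the surviving terms reassemble into the single Jacobi sum on the right. This is the direct analog of the argument used for (B): introduce one extra summation variable and isolate a bilinear constraint.

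First I would insert the definition into the left-hand side. Writing $(\alpha_i \beta)(x) = \alpha_i(x) \beta(x)$ and collecting the factors that depend on $\beta$, the left-hand side becomes
\begin{displaymath}
\frac{1}{(q-1)^2} \sum_{\substack{x_1+y_1=1,\ x_2+y_2=1 \\ x_i,y_i \in \FF^*}} \alpha_1(x_1) \alpha_2(y_1) \alpha_3(x_2) \alpha_4(y_2) \sum_{\beta \in \MM} \beta(x_1 y_1^{-1} x_2 y_2^{-1}).
\end{displaymath}
Since $\MM$ is the full character group of $\FF^*$, orthogonality gives $\sum_{\beta \in \MM} \beta(t) = q-1$ if $t=1$ and $0$ otherwise. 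Applying this with $t = x_1 y_1^{-1} x_2 y_2^{-1}$ collapses the inner sum to the single constraint $x_1 x_2 = y_1 y_2$, leaving
\begin{displaymath}
\frac{1}{q-1} \sum_{\substack{x_1+y_1=1,\ x_2+y_2=1 \\ x_1 x_2 = y_1 y_2 \\ x_i,y_i \in \FF^*}} \alpha_1(x_1) \alpha_2(y_1) \alpha_3(x_2) \alpha_4(y_2).
\end{displaymath}

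The key step is to simplify this constraint. Under $x_1+y_1=1$ and $x_2+y_2=1$, substituting $y_i = 1-x_i$ turns $x_1 x_2 = y_1 y_2$ into $0 = 1 - x_1 - x_2$, i.e.\ $x_1 + x_2 = 1$; and then automatically $y_1 = x_2$ and $y_2 = x_1$. Thus the entire configuration is governed by the single variable $x = x_1$, subject only to $x, 1-x \in \FF^*$. Setting $u = x$ and $v = 1-x$ and regrouping the characters as $(\alpha_1 \alpha_4)(u)$ and $(\alpha_2 \alpha_3)(v)$, the sum becomes
\begin{displaymath}
\frac{1}{q-1} \sum_{\substack{u+v=1 \\ u,v \in \FF^*}} (\alpha_1 \alpha_4)(u) (\alpha_2 \alpha_3)(v),
\end{displaymath}
which is precisely $\JJ(\alpha_1 \alpha_4, \alpha_2 \alpha_3)$ by definition. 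The only nontrivial ingredient is this algebraic collapse of the constraint to $x_1 + x_2 = 1$; everything else is orthogonality and bookkeeping, so I do not expect any real obstacle beyond being careful that the conditions $x_i, y_i \in \FF^*$ translate exactly into $u, v \in \FF^*$.
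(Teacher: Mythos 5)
Your proof is correct and is essentially the paper's own argument: expand both factors, use orthogonality over $\beta$ to impose $x_1x_2=y_1y_2$, and observe that together with $x_i+y_i=1$ this collapses to $x_1+x_2=1$ with $y_1=x_2$, $y_2=x_1$. No issues.
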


\begin{proof}
Let $\alpha_1, \alpha_2, \alpha_3, \alpha_4 \in \MM$ be given, and consider the sum
\begin{displaymath}
S = \sum_{\beta \in \MM} \JJ(\alpha_1 \beta, \alpha_2 \beta^{-1}) \JJ(\alpha_3 \beta, \alpha_4 \beta^{-1})
\end{displaymath}
We have
\begin{align*}
S &= (q-1)^{-2} \sum_{\beta} \sum_{x,y \ne 0,1} (\alpha_1 \beta)(x) (\alpha_2 \beta^{-1})(1-x) (\alpha_3 \beta)(y) (\alpha_4 \beta^{-1})(1-y) \\
&= (q-1)^{-2} \sum_{x,y \ne 0,1} \bigg( \alpha_1(x) \alpha_2(1-x) \alpha_3(y) \alpha_4(1-y) \sum_{\beta} \beta \big( \frac{xy}{(1-x)(1-y)} \big) \bigg).
\end{align*}
Now, the inner sum vanishes unless the argument to $\beta$ is equal to~1, in which case it is equal to $q-1$. The equation $xy=(1-x)(1-y)$ is equivalent to $x+y=1$. We thus obtain
\begin{displaymath}
S = (q-1)^{-1} \sum_{\substack{x+y=1, \\ x,y \ne 0,1}} \alpha_1(x) \alpha_2(y) \alpha_3(y) \alpha_4(x) = \JJ(\alpha_1 \alpha_4, \alpha_2 \alpha_3),
\end{displaymath}
and so the result follows.
\end{proof}

\section{Initial analysis} \label{s:initial}

Fix, for the duration of \S \ref{s:initial}, a finite abelian group $M$ and a Jacobi function $J$ on $M$. We let $m$ be the order of $M$, and we let $\hat{M}$ be the Pontryagin dual of $M$. Our goal is to prove the following result:

\begin{proposition} \label{prop:initial}
There is a subset $S$ of $\hat{M}$ and a function $i \colon S \to \hat{M}$ such that
\begin{displaymath}
J(\alpha, \beta) = \frac{1}{m} \sum_{x \in S} \alpha(i(x)) \beta(i(x)x^{-1}).
\end{displaymath}
The set $S$ is closed under inversion (i.e., $x \in S$ implies $x^{-1} \in S$), and the function $i$ satisfies the identity $i(x)=xi(x^{-1})$. Moreover, we are in one of the following two cases:
\begin{enumerate}
\item There is an element $c \in \hat{M}$ satisfying $c^2=1$ such that $S=\hat{M} \setminus \{c\}$, and $i \colon \hat{M} \setminus \{c\} \to \hat{M} \setminus \{1\}$ is a bijection.
\item We have $S=\hat{M}$ and $i \colon \hat{M} \to \hat{M}$ is a bijection.
\end{enumerate}
\end{proposition}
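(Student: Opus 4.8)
The plan is to carry out the Fourier-theoretic strategy sketched in the introduction. First I would introduce the inverse Fourier transform
\[
f(x,y) = \frac{1}{m}\sum_{\alpha,\beta\in M} J(\alpha,\beta)\,\overline{\alpha(x)\beta(y)},\qquad (x,y)\in\hat M\times\hat M,
\]
normalized so that $J(\alpha,\beta)=\frac{1}{m}\sum_{x,y}f(x,y)\alpha(x)\beta(y)$. Condition (A) translates at once into the symmetry $f(x,y)=f(y,x)$. The main input is (C): substituting the Fourier expansion of each factor, carrying out the sum over $\beta$ (which imposes the constraint $uv^{-1}=yx^{-1}$ by orthogonality), and comparing the coefficients of $\alpha_1(x)\alpha_2(y)\alpha_3(u)\alpha_4(v)$ on the two sides gives the pointwise identity
\[
f(x,y)f(u,v)\,\mathbf 1[uv^{-1}=yx^{-1}]=f(x,y)\,\mathbf 1[(u,v)=(y,x)].
\]
Feeding $(u,v)=(y,x)$ into this and using the symmetry from (A) forces $f(x,y)\in\{0,1\}$, so $f=\mathbf 1_X$ for a set $X\subseteq\hat M\times\hat M$; the identity then says exactly that $X$ is stable under the swap $(u,v)\mapsto(v,u)$ and that each ``ratio class'' $\{uv^{-1}=s\}$ meets $X$ in at most one point. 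Setting $S=\{uv^{-1}:(u,v)\in X\}$ and letting $i(s)$ be the first coordinate of the unique point of $X$ with $uv^{-1}=s$ yields the displayed formula for $J$, and the swap-stability gives both that $S$ is closed under inversion and the relation $i(x)=x\,i(x^{-1})$.

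It remains to establish the dichotomy, and this is where I would bring in (B). Put $k=|S|=|X|$ and $N(\alpha)=J(1,\alpha)$; expanding against $\mathbf 1_X$ gives $N(\alpha)=\frac{1}{m}\sum_{(u,v)\in X}\alpha(v)$ and $J(1,1)=k/m$, and, letting $n(u)$ denote the number of $(u,v)\in X$ with first coordinate $u$ (equal by swap-stability to the number with second coordinate $u$), we have $N(\alpha)=\frac{1}{m}\sum_u n(u)\alpha(u)$. I would then specialize the reformulation of (B) in Proposition~\ref{prop:reform} at $\beta=\gamma=1$; after using $J(\alpha,1)=N(\alpha)$ and $J(1,1)=k/m$ this collapses, for every $\alpha\ne1$, to the single quadratic
\[
N(\alpha)^2=\tfrac{k}{m}\,N(\alpha)+\bigl(1-\tfrac{k}{m}\bigr),
\]
whose only roots are $N(\alpha)=k/m-1$ and $N(\alpha)=1$. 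On the other hand the triangle inequality gives the a priori bound $|N(\alpha)|\le k/m$, and of course $k=|S|\le m$.

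These two facts finish the argument. If $k<m$ then $|N(\alpha)|\le k/m<1$ rules out the root $1$, so $N(\alpha)=k/m-1$ for every $\alpha\ne1$; Fourier inversion then gives $n(u)=1+(k-m)\,\mathbf 1[u=1]$, and nonnegativity of the multiplicity $n(1)=k-m+1$ forces $k=m-1$, so $n\equiv1$ off the identity and $n(1)=0$. This is case (1), with $c$ the unique element of $\hat M\setminus S$, an involution since $S$ is inversion-stable, and $i$ a bijection $S\to\hat M\setminus\{1\}$. If instead $k=m$, so $S=\hat M$, the bound is no longer strict, and here the main obstacle is to exclude the extra root: I would note that $N(\alpha)=1$ forces equality in the triangle inequality, i.e. $\alpha\equiv1$ on $\mathrm{pr}_2(X)$, and then use swap-stability to conclude $S\subseteq\langle\mathrm{pr}_2(X)\rangle$, so that $S=\hat M$ makes $\mathrm{pr}_2(X)$ generate $\hat M$ and hence $\alpha=1$. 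Thus again $N(\alpha)=k/m-1=0$ for all $\alpha\ne1$, whence $n\equiv1$ and $i$ is a bijection $\hat M\to\hat M$, which is case (2). The delicate steps are extracting the clean quadratic from (B) and treating the non-strict boundary case $k=m$; once $N$ is pinned down, the bijectivity of $i$ and the description of $S$ read off immediately from $n$.
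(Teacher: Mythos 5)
Your proof is correct, and it follows the same overall strategy as the paper (the inverse Fourier transform of $J$ is the indicator function of a set $X$, which is then parametrized by ratio classes), but the two key steps are packaged differently. Where you expand condition (C) over $M^4$ and compare coefficients to get the pointwise identity $f(x,y)f(u,v)\mathbf{1}[uv^{-1}=yx^{-1}]=f(x,y)\mathbf{1}[(u,v)=(y,x)]$, the paper defines the slices $Q_{\alpha}(\beta)=J(\alpha\beta^{-1},\beta)$ and reads (C) as the convolution identity $Q_{\alpha}\ast Q_{\beta}=Q_{\alpha\beta}$, so that $\hat{Q}_1$ is idempotent with support $S$ and $\alpha\mapsto\hat{Q}_{\alpha}(x)$ is the character $i(x)$; these are two organizations of the same computation, with yours making the ``each ratio class meets $X$ at most once'' structure explicit and the paper's producing $S$ and $i$ with less bookkeeping. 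For the dichotomy the arguments are closer than they look: the paper's relation $J^*(1,\alpha)^2=J^*(1,1)J^*(1,\alpha)$ is exactly your quadratic (its two options $J^*(1,\alpha)=0$ and $J^*(1,\alpha)=J^*(1,1)$ are your roots $1$ and $k/m-1$), and Lemma~\ref{lem:init-4} excludes the root $1$ by the same root-of-unity counting you use. The genuine divergence is at the end: the paper first derives injectivity of $i$ from the swap relation (via $x=i(x)i(x^{-1})^{-1}$) and then Fourier-inverts the sum over the honest set $T=i(S)$, which kills the root $1$ uniformly and avoids any boundary case; you never invoke injectivity, instead Fourier-inverting the fiber-count function $n$ and deducing $n\equiv 1$ (hence bijectivity) at the end, at the cost of the separate generation argument for $\mathrm{pr}_2(X)$ when $k=m$. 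Both routes are complete; the paper's is slightly leaner, yours makes the combinatorics of $X$ more transparent.
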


\begin{remark} \label{rmk:classical}
In the classical setting, where $J$ is the Jacobi sum $\JJ$ on a finite field $\FF$, we are in case (a) with $c=-1$ and $i(x)=x/(1+x)$. For the $a=1$ Jacobi function in \S \ref{ss:rmk}(a), we are in case~(b).
\end{remark}

The proof is contained in a series of lemmas below. We first make some definitions. The \defn{convolution} of two functions $f,g \colon M \to \bC$ is the function $f \ast g \colon M \to \bC$ given by
\begin{displaymath}
(f \ast g)(\alpha) = \sum_{\alpha=\beta \gamma} f(\beta) g(\gamma).
\end{displaymath}
The \defn{Fourier transform} of $f$ is the function $\hat{f} \colon \hat{M} \to \bC$ given by
\begin{displaymath}
\hat{f}(x) = \sum_{\alpha \in M} f(\alpha) \alpha(x).
\end{displaymath}
We have the usual identity $(f \ast g)^{\wedge} = \hat{f} \cdot \hat{g}$. For $\alpha \in M$, define $Q_{\alpha} \colon M \to \bC$ by
\begin{displaymath}
Q_{\alpha}(\beta) = J(\alpha \beta^{-1}, \beta).
\end{displaymath}
These functions have the following important property:

\begin{lemma} \label{lem:init-1}
We have $Q_{\alpha} \ast Q_{\beta}=Q_{\alpha \beta}$ for all $\alpha, \beta \in M$.
\end{lemma}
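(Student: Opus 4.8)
The plan is to recognize that this lemma is essentially a repackaging of condition (C): I would evaluate the convolution at an arbitrary element $\mu \in M$ and massage it into the exact shape of the left-hand side of (C), then apply (C).

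Concretely, unwinding the definitions and reparametrizing the sum over factorizations $\nu\rho = \mu$ by the free variable $\nu$ (with $\rho = \mu\nu^{-1}$) gives
\[
(Q_{\alpha} \ast Q_{\beta})(\mu) = \sum_{\nu\rho = \mu} Q_{\alpha}(\nu)\, Q_{\beta}(\rho) = \sum_{\nu \in M} J(\alpha\nu^{-1}, \nu)\, J(\beta\rho^{-1}, \rho), \qquad \rho = \mu\nu^{-1}.
\]
The first factor already has the ``diagonal'' form in which the summation variable appears with opposite exponents in its two slots. I would then perform the substitution $\theta = \nu^{-1}$ and apply the symmetry condition (A) to the second factor, so that $\theta$ occupies the first slot and $\theta^{-1}$ the second slot in both factors. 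After these two routine moves the sum becomes
\[
\sum_{\theta \in M} J(\alpha\theta, \theta^{-1})\, J(\mu\theta, \beta\mu^{-1}\theta^{-1}),
\]
which is exactly the left-hand side of condition (C) with $\alpha_1 = \alpha$, $\alpha_2 = 1$, $\alpha_3 = \mu$, and $\alpha_4 = \beta\mu^{-1}$.

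Condition (C) then evaluates this to $J(\alpha_1\alpha_4, \alpha_2\alpha_3) = J(\alpha\beta\mu^{-1}, \mu) = Q_{\alpha\beta}(\mu)$, which is the desired identity. Since $\mu$ was arbitrary, the lemma follows.

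I expect no genuine obstacle here: all of the content lies in condition (C), and the only care required is the bookkeeping with group elements — choosing the reparametrization $\rho = \mu\nu^{-1}$ and the inversion $\theta = \nu^{-1}$ correctly, and remembering to invoke (A) once to align the exponents of the summation variable across the two Jacobi factors. The point of stating the lemma in this convolutional form is presumably that it sets up the Fourier-transform argument sketched in the introduction, where $\widehat{Q_{\alpha} \ast Q_{\beta}} = \widehat{Q_{\alpha}} \cdot \widehat{Q_{\beta}}$ will be used to extract multiplicative structure.
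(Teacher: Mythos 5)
Your proposal is correct and matches the paper's proof of Lemma~\ref{lem:init-1}: both unwind the convolution, re-index the sum so the summation variable appears with opposite exponents in each factor (using (A) to align slots), and then apply condition (C) with the same choice of parameters. No issues.
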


\begin{proof}
We have
\begin{align*}
(Q_{\alpha} \ast Q_{\beta})(\chi)
&= \sum_{\gamma \in M} Q_{\alpha}(\gamma) Q_{\beta}(\chi \gamma^{-1}) \\
&= \sum_{\gamma \in M} J(\alpha \gamma^{-1}, \gamma) J(\beta \chi^{-1} \gamma, \chi \gamma^{-1}) \\
&= J(\alpha \beta \chi^{-1}, \chi) = Q_{\alpha \beta}(\chi),
\end{align*}
as required. In the third step we used condition (C).
\end{proof}

\begin{lemma} \label{lem:init-2}
There is a subset $S \subset \hat{M}$ and a function $i \colon S \to \hat{M}$ such that
\begin{displaymath}
J(\alpha, \beta) = \frac{1}{m} \sum_{x \in S} \alpha(i(x)) \beta(i(x) x^{-1}).
\end{displaymath}
\end{lemma}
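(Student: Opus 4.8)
The plan is to pass to Fourier transforms. Since $(f \ast g)^{\wedge} = \hat{f} \cdot \hat{g}$, Lemma~\ref{lem:init-1} immediately gives the pointwise identity $\hat{Q}_{\alpha\beta}(x) = \hat{Q}_{\alpha}(x)\,\hat{Q}_{\beta}(x)$ for all $\alpha, \beta \in M$ and all $x \in \hat{M}$. The key observation is that this says precisely: for each fixed $x \in \hat{M}$, the function $M \to \bC$ sending $\alpha \mapsto \hat{Q}_{\alpha}(x)$ is multiplicative.

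I would then classify such multiplicative functions. For a finite abelian group $M$, a function $\phi \colon M \to \bC$ satisfying $\phi(\alpha\beta) = \phi(\alpha)\phi(\beta)$ is either identically zero or is a character of $M$: if $\phi$ is not identically zero then $\phi(1) = 1$, and since $\phi(\alpha)^{m} = \phi(\alpha^{m}) = \phi(1) = 1$ every value is a root of unity, hence nonzero, so $\phi$ is a homomorphism $M \to \bC^{\ast}$. By Pontryagin duality such a character is of the form $\alpha \mapsto \alpha(y)$ for a unique $y \in \hat{M}$. Applying this to $\alpha \mapsto \hat{Q}_{\alpha}(x)$, I define $S \subset \hat{M}$ to be the set of those $x$ for which this function is not identically zero, and for each $x \in S$ I let $i(x) \in \hat{M}$ be the unique element with $\hat{Q}_{\alpha}(x) = \alpha(i(x))$ for all $\alpha \in M$. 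This produces the desired $S$ and $i \colon S \to \hat{M}$.

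It remains to recover $J$ by Fourier inversion. Directly from the definition of $Q$ we have $J(\alpha, \beta) = Q_{\alpha\beta}(\beta)$, while the inversion formula reads $Q_{\gamma}(\beta) = \frac{1}{m} \sum_{x \in \hat{M}} \hat{Q}_{\gamma}(x)\,\overline{\beta(x)}$. The terms with $x \notin S$ vanish, and for $x \in S$ I substitute $\hat{Q}_{\alpha\beta}(x) = (\alpha\beta)(i(x)) = \alpha(i(x))\beta(i(x))$. Finally $\overline{\beta(x)} = \beta(x^{-1})$, so $\beta(i(x))\,\overline{\beta(x)} = \beta(i(x)x^{-1})$, and collecting terms yields exactly the asserted formula $J(\alpha,\beta) = \frac{1}{m} \sum_{x \in S} \alpha(i(x))\beta(i(x)x^{-1})$.

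The only step that is not pure formalism is the classification of multiplicative functions, and this is where finiteness of $M$ is essential: it forces the nonzero values to be roots of unity, so that a non-vanishing multiplicative function genuinely lands in $\hat{M}$ and $i(x)$ is well-defined there rather than being merely a multiplicative map into $\bC$. Everything else is the standard dictionary between convolution and pointwise product under the Fourier transform, combined with inversion. (The additional structural statements of Proposition~\ref{prop:initial} — that $S$ is closed under inversion, that $i(x) = x\,i(x^{-1})$, and the dichotomy between cases (a) and (b) — do not follow from this lemma alone and would be extracted afterward from conditions (A) and (B).)
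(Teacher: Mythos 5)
Your proposal is correct and follows essentially the same route as the paper: the Fourier transform converts the convolution identity of Lemma~\ref{lem:init-1} into pointwise multiplicativity of $\alpha \mapsto \hat{Q}_{\alpha}(x)$, which (being a multiplicative function on a finite abelian group) is either identically zero or given by pairing with some $i(x) \in \hat{M}$, and Fourier inversion together with $J(\alpha,\beta) = Q_{\alpha\beta}(\beta)$ finishes the argument. The only cosmetic difference is that the paper defines $S$ by the condition $\hat{Q}_1(x) = 1$ (after noting $\hat{Q}_1$ is $\{0,1\}$-valued and $\hat{Q}_{\alpha}^m = \hat{Q}_1$) rather than by non-vanishing of the multiplicative function, which is an equivalent formulation.
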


\begin{proof}
From Lemma~\ref{lem:init-1}, we find
\begin{displaymath}
\hat{Q}_{\alpha} \cdot \hat{Q}_{\beta} = \hat{Q}_{\alpha \beta}.
\end{displaymath}
In particular, we see that $\hat{Q}_1$ takes values in the set $\{0,1\}$. Let $S$ be the set of elements $x \in \hat{M}$ where $\hat{Q}_1(x)=1$. We have $\hat{Q}_{\alpha}^m =\hat{Q}_1$, and so $\hat{Q}_{\alpha}$ vanishes off of $S$. For $x \in S$, the function $\alpha \mapsto \hat{Q}_{\alpha}(x)$ is a character of $M$, and thus given by pairing with some element $i(x) \in \hat{M}$. By Fourier inversion, we have
\begin{displaymath}
Q_{\alpha}(\beta)\
= \frac{1}{m} \sum_{x \in \hat{M}} \hat{Q}_{\alpha}(x) \beta^{-1}(x)
= \frac{1}{m} \sum_{x \in S} \alpha(i(x)) \beta^{-1}(x).
\end{displaymath}
Since $J(\alpha, \beta) = Q_{\alpha \beta}(\beta)$, the result follows.
\end{proof}

\begin{lemma} \label{lem:init-3}
The set $S$ is closed under inversion, and $i(x)=x i(x^{-1})$ for all $x \in S$.
\end{lemma}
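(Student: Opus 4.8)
The plan is to extract the one hypothesis we have not yet used, namely condition (A), and push it through the Fourier transform. All the information about $S$ and $i$ is packaged into the collection of Fourier transforms $\hat{Q}_\alpha$, via the relations $\hat{Q}_\alpha(x) = \alpha(i(x))$ for $x \in S$ and $\hat{Q}_\alpha(x) = 0$ otherwise (established in Lemma~\ref{lem:init-2}). So the strategy is to convert the symmetry of $J$ into a symmetry of the functions $\hat{Q}_\alpha$, and then read off both claims.

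First I would reformulate (A) as a statement about the functions $Q_\alpha$. Since $Q_\alpha(\beta) = J(\alpha \beta^{-1}, \beta)$ and $Q_\alpha(\alpha \beta^{-1}) = J(\beta, \alpha \beta^{-1})$, condition (A) is precisely the identity $Q_\alpha(\beta) = Q_\alpha(\alpha \beta^{-1})$ for all $\alpha, \beta \in M$; note that $\beta \mapsto \alpha \beta^{-1}$ is an involution of $M$. Next I would feed this into the Fourier transform: re-indexing the sum defining $\hat{Q}_\alpha(x)$ by the substitution $\beta \mapsto \alpha \beta^{-1}$ and applying the previous identity yields the functional equation
\begin{displaymath}
\hat{Q}_\alpha(x) = \alpha(x) \, \hat{Q}_\alpha(x^{-1})
\end{displaymath}
for all $\alpha \in M$ and $x \in \hat{M}$. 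This one equation does all the work. Taking $\alpha = 1$ gives $\hat{Q}_1(x) = \hat{Q}_1(x^{-1})$, so the level set $S = \{ x \in \hat{M} : \hat{Q}_1(x) = 1 \}$ is closed under inversion. Then for $x \in S$ (hence $x^{-1} \in S$ as well), substituting $\hat{Q}_\alpha(x) = \alpha(i(x))$ and $\hat{Q}_\alpha(x^{-1}) = \alpha(i(x^{-1}))$ turns the functional equation into $\alpha(i(x)) = \alpha(x \, i(x^{-1}))$ for every $\alpha \in M$; since the pairing between $M$ and $\hat{M}$ is perfect, this forces $i(x) = x \, i(x^{-1})$.

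The only genuinely delicate step is obtaining the functional equation $\hat{Q}_\alpha(x) = \alpha(x) \hat{Q}_\alpha(x^{-1})$ — in particular, recognizing that condition (A) is exactly the self-symmetry of $Q_\alpha$ under the involution $\beta \mapsto \alpha \beta^{-1}$, and keeping the change of variables and the character relation $\gamma^{-1}(x) = \gamma(x^{-1})$ straight. Once that identity is in hand, both assertions of the lemma drop out immediately, and the remaining manipulations are routine bookkeeping with the pairing.
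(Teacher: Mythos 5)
Your proof is correct. It uses the same underlying mechanism as the paper --- translate condition (A) to the Fourier side and invoke uniqueness --- but the packaging is genuinely different. The paper forms the subset $X = \{(i(x), i(x)x^{-1}) : x \in S\}$ of $\hat{M}^2$ and its coordinate-swap $Y$, observes that (A) forces the indicator functions of $X$ and $Y$ to have the same two-variable Fourier transform, concludes $X = Y$, and then solves the resulting pair of equations $i(x) = i(y)y^{-1}$, $i(x)x^{-1} = i(y)$ to get $y = x^{-1}$. You instead stay entirely inside the one-variable framework already set up in Lemma~\ref{lem:init-2}: you recast (A) as the involution symmetry $Q_\alpha(\beta) = Q_\alpha(\alpha\beta^{-1})$, push it through the transform to get the functional equation $\hat{Q}_\alpha(x) = \alpha(x)\,\hat{Q}_\alpha(x^{-1})$, and read off both conclusions (the $\alpha = 1$ case gives closure of $S$ under inversion; perfectness of the pairing gives $i(x) = x\,i(x^{-1})$). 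Your route avoids introducing the sets $X$, $Y$ and the injectivity of $x \mapsto (i(x), i(x)x^{-1})$, and it isolates a clean identity for $\hat{Q}_\alpha$ that makes the two assertions fall out simultaneously; the paper's version has the advantage of introducing the set $X$, which is the geometric object (the ``line $x + y = 1$'') that drives the rest of the argument in \S\ref{s:a}. All the individual steps you flag as delicate (the involution $\beta \mapsto \alpha\beta^{-1}$, the change of variables, the relation $\beta^{-1}(x) = \beta(x^{-1})$) check out.
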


\begin{proof}
Let $X \subset \hat{M}^2$ be the set of all pairs $(i(x), i(x)x^{-1})$ with $x \in S$, and let $Y$ be defined similarly but with the coordinates switched. Since $x \mapsto (i(x), i(x)x^{-1})$ is injective, we have
\begin{displaymath}
J(\alpha, \beta) = \frac{1}{m} \sum_{(x,y) \in X} \alpha(x) \beta(y).
\end{displaymath}
By condition (A), the same holds using $Y$ in place of $X$. Thus the indicator functions of $X$ and $Y$ have the same Fourier transform, and so $X=Y$. Therefore, given $x \in S$, there exists $y \in S$ such that $i(x)=i(y)y^{-1}$ and $i(x)x^{-1}=i(y)$. These equations imply $y=x^{-1}$, and so the result follows.
\end{proof}

From the above lemma, we see that $x=i(x) i(x^{-1})^{-1}$, which shows that $i$ is injective. Let $T \subset \hat{M}$ be the image of $i$, so that $i \colon S \to T$ is a bijection. We have
\begin{displaymath}
J(1, \alpha)=\frac{1}{m} \sum_{x \in T} \alpha(x).
\end{displaymath}

\begin{lemma} \label{lem:init-4}
We have $J^*(1, \alpha) \ne 0$ for all non-trivial $\alpha$.
\end{lemma}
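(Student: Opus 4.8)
The plan is to reduce the statement to an elementary estimate on a character sum. First I would unwind the definition of $J^*$: since $\alpha$ is non-trivial we have $\delta(\alpha)=0$, whereas $\delta(1)=1$, so $J^*(1,\alpha)=-\delta(1)-\delta(\alpha)+J(1,\alpha)=J(1,\alpha)-1$. Hence the lemma is equivalent to the assertion that $J(1,\alpha)\ne 1$ for every non-trivial $\alpha\in M$, and it is this inequality that I would prove.

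To this end I would use the formula $J(1,\alpha)=\frac{1}{m}\sum_{x\in T}\alpha(x)$ recorded just before the lemma, where $T=i(S)$ is a subset of $\hat{M}$. Each value $\alpha(x)$ is a root of unity, hence has modulus $1$, so the triangle inequality yields $\bigl|\sum_{x\in T}\alpha(x)\bigr|\le |T|\le |\hat{M}|=m$ and therefore $|J(1,\alpha)|\le 1$. It remains only to exclude the boundary case $J(1,\alpha)=1$.

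The equality $J(1,\alpha)=1$ would force both inequalities above to be equalities: we would need $|T|=m$, i.e.\ $T=\hat{M}$, and we would need $\alpha(x)=1$ for every $x\in T$. But if $T=\hat{M}$ then orthogonality of characters gives $\sum_{x\in\hat{M}}\alpha(x)=0$ for the non-trivial $\alpha$, so $J(1,\alpha)=0\ne 1$; alternatively, $\alpha(x)=1$ for all $x\in\hat{M}$ would force $\alpha=1$ by non-degeneracy of the Pontryagin pairing. In either reading we reach a contradiction, so $J(1,\alpha)\ne 1$ and the lemma follows. The argument is entirely elementary; the only step demanding slight care is this equality analysis, where the trivial bound $|T|\le m$ must be combined with the orthogonality relations to rule out the value $1$.
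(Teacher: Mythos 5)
Your proposal is correct and follows essentially the same route as the paper: reduce to $J(1,\alpha)\ne 1$, use the formula $J(1,\alpha)=\frac{1}{m}\sum_{x\in T}\alpha(x)$, and observe that a sum of at most $m$ roots of unity can equal $m$ only if $T=\hat{M}$ and all $\alpha(x)=1$, forcing $\alpha=1$. Your extra step invoking orthogonality is a valid alternative finish but is not needed beyond the non-degeneracy observation.
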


\begin{proof}
Suppose there is $\alpha \ne 1$ such that $J^*(1, \alpha)=0$. This implies $J(1, \alpha)=1$, and so
\begin{displaymath}
\sum_{x \in T} \alpha(x)=m.
\end{displaymath}
Since each $\alpha(x)$ is a root of unity and $\# T \le m$, this equation can only hold if $T=\hat{M}$ and $\alpha=1$, a contradiction.
\end{proof}

\begin{lemma} \label{lem:init-5}
We are in either case (a) or (b) from Proposition~\ref{prop:initial}.
\end{lemma}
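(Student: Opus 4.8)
The plan is to pin down the set $T$ (the image of $i$) and then read off the two cases from its size. The crux is that conditions (B) and Lemma \ref{lem:init-4}, taken together, force $J(1,-)$ to be constant on $M\setminus\{1\}$; once this is known, $T$ is essentially determined and the dichotomy falls out.

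Concretely, I would substitute $\alpha=\beta=1$ and an arbitrary $\gamma\ne1$ into the reformulated condition (B) of Proposition \ref{prop:reform}. Since $\delta(\gamma)=\delta(\beta\gamma)=0$ while $\delta(\alpha)=\delta(\beta)=\delta(\alpha\beta)=1$, and $J(\alpha,\alpha^{-1})=J(1,1)$, the identity collapses to
\[
J(1,1)\,J(1,\gamma)-J(1,1)+1 = J(1,\gamma)^2 .
\]
Writing $a=J(1,1)$ and $b=J(1,\gamma)$, this reads $b^2-ab+(a-1)=0$, which factors as $(b-1)\bigl(b-(a-1)\bigr)=0$, so $J(1,\gamma)\in\{1,\,a-1\}$. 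Now Lemma \ref{lem:init-4} excludes the value $1$, since it asserts $J^*(1,\gamma)=J(1,\gamma)-1\ne0$ for $\gamma\ne1$. Hence $J(1,\gamma)=a-1$ for every $\gamma\ne1$. Securing this constancy is the step I expect to be the main obstacle; the remainder is bookkeeping.

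With the constancy in hand I would recover $T$ by orthogonality. Using $J(1,\gamma)=\frac1m\sum_{x\in T}\gamma(x)$ and $a=\#T/m$, together with $\sum_{x\in\hat M}\gamma(x)=m\delta(\gamma)$, the relation just proved yields $\sum_{x\in\hat M\setminus T}\gamma(x)=m-\#T$ for every $\gamma\in M$ (the case $\gamma=1$ being trivial). A sum of $m-\#T$ roots of unity can equal the nonnegative real number $m-\#T$ only if every summand equals $1$, so $\gamma(x)=1$ for all $\gamma\in M$ and all $x\notin T$; by nondegeneracy of the pairing this forces $x=1$. Therefore $\hat M\setminus T\subseteq\{1\}$, i.e.\ either $T=\hat M$ or $T=\hat M\setminus\{1\}$.

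Finally I would split into the two cases, using that $i\colon S\to T$ is a bijection, so $\#S=\#T$. If $T=\hat M$ then $\#S=m$, whence $S=\hat M$ and $i$ is a bijection of $\hat M$: this is case (b). If $T=\hat M\setminus\{1\}$ then $\#S=m-1$, so $S=\hat M\setminus\{c\}$ for a unique element $c$; since $S$ is closed under inversion by Lemma \ref{lem:init-3}, its one-element complement $\{c\}$ is also closed under inversion, forcing $c^{-1}=c$, that is $c^2=1$. Then $i\colon\hat M\setminus\{c\}\to\hat M\setminus\{1\}$ is a bijection, which is case (a), completing the argument.
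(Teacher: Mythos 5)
Your proposal is correct and follows essentially the same route as the paper: your quadratic $b^2-ab+(a-1)=0$ with the root $b=1$ excluded by Lemma~\ref{lem:init-4} is exactly the paper's cancellation in $J^*(1,\alpha)^2=J^*(1,1)J^*(1,\alpha)$, written via Proposition~\ref{prop:reform} instead of $J^*$. The orthogonality/roots-of-unity argument recovering $\hat M\setminus T\subseteq\{1\}$ and the final case split likewise match the paper's Fourier-transform step.
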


\begin{proof}
For any $\alpha \in M$, condition (B) gives
\begin{displaymath}
J^*(1, \alpha) \cdot J^*(1, \alpha) = J^*(1, 1) \cdot J^*(1, \alpha).
\end{displaymath}
We thus find that $J^*(1, \alpha)=J^*(1,1)$ for all $\alpha$; indeed, if $\alpha$ is non-trivial then we can cancel one $J^*(1, \alpha)$ factor from each side by Lemma~\ref{lem:init-4}. Putting $\kappa=J(1,1)-1$, we find $J(1, \alpha)=\kappa+\delta(\alpha)$ for all $\alpha \in M$. Thus
\begin{displaymath}
\kappa+\delta(\alpha) = \frac{1}{m} \sum_{x \in T} \alpha(x)
\end{displaymath}
holds for all $\alpha$. Taking the Fourier transform, we see that $T$ contains all non-identity elements of $\hat{M}$. If $T$ contains~1 then $T=\hat{M}$ and we are in case (b) of the proposition. If $T$ does not contain~1 then $\# S=\# T = m-1$, and so $S=\hat{M} \setminus \{c\}$ for some $c$. Since $S$ is closed under inversion, it follows that $c^2=1$, and so we are in case (a).
\end{proof}

\section{The first case} \label{s:a}

Let $J$ be a Jacobi function on $M$, and suppose we are in the first case of Propopsition~\ref{prop:initial}; that is, there is an element $c \in \hat{M}$ satisfying $c^2=1$ and a bijection $i \colon \hat{M} \setminus \{c\} \to \hat{M} \setminus \{1\}$ as in the proposition. Put $F=\hat{M} \sqcup \{0\}$. We extend the multiplication law on $\hat{M}$ to all of $F$ by defining $0 \cdot x = x \cdot 0 = x$ for all $x \in F$. We define a binary operation $\oplus$ on $F$ by taking 0 to be the identity, and declaring
\begin{displaymath}
x \oplus y = \begin{cases}
0 & \text{if $x=cy$} \\
xi(x/y)^{-1} & \text{otherwise} \end{cases}
\end{displaymath}
when $x$ and $y$ are both non-zero. Our goal is to prove the following proposition, which verifies the main theorem in this case:

\begin{proposition}
With the above structure, $F$ is a field. Moreover, we have
\begin{displaymath}
J(\alpha, \beta) = \frac{1}{m} \sum_{\substack{x \oplus y=1,\\ x,y \in F \setminus \{0\}}} \alpha(x) \beta(y)
\end{displaymath}
for all $\alpha, \beta \in M$.
\end{proposition}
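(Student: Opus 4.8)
The plan is as follows. The displayed formula for $J$ is almost immediate: by Proposition~\ref{prop:initial} we have $J(\alpha,\beta)=\frac{1}{m}\sum_{x\in S}\alpha(i(x))\beta(i(x)x^{-1})$, and since $x\mapsto(i(x),i(x)x^{-1})$ is injective this reads $J(\alpha,\beta)=\frac{1}{m}\sum_{(u,v)\in X}\alpha(u)\beta(v)$, where $X\subset\hat{M}^2$ is the image of that map. I would first check directly from the definition of $\oplus$ that, for nonzero $u,v$, one has $u\oplus v=1$ exactly when $(u,v)\in X$: the case $u=cv$ is excluded since it gives $u\oplus v=0$, and otherwise $u\oplus v=1$ unwinds to $i(u/v)=u$, i.e.\ $(u,v)=(i(t),i(t)t^{-1})$ with $t=u/v\in S$. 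Thus the solution set of $u\oplus v=1$ is precisely $X$, which yields the formula. So the real content is that $F$ is a field.

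Most field axioms are short direct computations. The element $0$ is the additive identity by fiat, and $cx$ is an additive inverse of a nonzero $x$ since $x=c(cx)$ forces $x\oplus(cx)=0$, using $c^2=1$. Commutativity is the identity $x\,i(x/y)^{-1}=y\,i(y/x)^{-1}$, which follows from Lemma~\ref{lem:init-3} (namely $i(t)=t\,i(t^{-1})$ applied to $t=x/y$), together with $c^2=1$ to match the degenerate cases $x=cy$ and $y=cx$. Distributivity $z(x\oplus y)=(zx)\oplus(zy)$ is immediate, since $zx=c(zy)\iff x=cy$ and $i((zx)/(zy))=i(x/y)$, the cases involving $0$ being trivial once $0$ is read as the absorbing element for multiplication. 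As $(F\setminus\{0\},\cdot)=(\hat{M},\cdot)$ is already an abelian group and $1\ne 0$, the only remaining axiom, and the crux of the proof, is associativity of $\oplus$.

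For associativity I would argue as in the verification that $\JJ$ satisfies (B), run in reverse. Using homogeneity one computes the partial sum $S(w)=\frac{1}{m}\sum_{x\oplus y=w}\alpha(x)\beta(y)$ over nonzero $x,y$: for $w\ne 0$ it equals $(\alpha\beta)(w)J(\alpha,\beta)$, while $S(0)=\alpha(c)\delta(\alpha\beta)$ since $x\oplus y=0\iff x=cy$. Grouping a triple sum in the two possible ways, put
\[
\Sigma=\frac{1}{m^2}\sum_{\substack{x,y,z\ne0\\(x\oplus y)\oplus z=1}}\alpha(x)\beta(y)\gamma(z),\qquad \Sigma'=\frac{1}{m^2}\sum_{\substack{x,y,z\ne0\\x\oplus(y\oplus z)=1}}\alpha(x)\beta(y)\gamma(z).
\]
Summing first over the bracketed value and using the formula for $J$ already obtained, I get $\Sigma=J(\alpha,\beta)J(\alpha\beta,\gamma)+\frac{1}{m}\alpha(c)\delta(\alpha\beta)$ and $\Sigma'=J(\alpha,\beta\gamma)J(\beta,\gamma)+\frac{1}{m}\beta(c)\delta(\beta\gamma)$.

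The hard step is to show $\Sigma=\Sigma'$. By Proposition~\ref{prop:reform}, condition (B) rewrites $J(\alpha,\beta)J(\alpha\beta,\gamma)-J(\alpha,\beta\gamma)J(\beta,\gamma)$ in terms of the diagonal values $J(\alpha,\alpha^{-1})$, $J(\gamma,\gamma^{-1})$ and products of $\delta$'s; the diagonal values I evaluate from the formula for $J$ (on $X$ one has $x/y=t$), giving $J(\alpha,\alpha^{-1})=\frac{1}{m}\sum_{t\in S}\alpha(t)=\delta(\alpha)-\frac{1}{m}\alpha(c)$. Substituting and simplifying, with the genuine $\delta$-products cancelling in pairs via $\delta(\alpha)\delta(\alpha\beta)=\delta(\alpha)\delta(\beta)$, the required identity reduces to $\gamma(c)\delta(\beta\gamma)=\beta(c)\delta(\beta\gamma)$, which holds because $\beta\gamma=1$ forces $\gamma(c)=\beta(c^{-1})=\beta(c)$ since $c^2=1$. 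Finally, $\Sigma=\Sigma'$ for all $\alpha,\beta,\gamma$ means the indicator functions of $L=\{(x,y,z):(x\oplus y)\oplus z=1\}$ and $R=\{(x,y,z):x\oplus(y\oplus z)=1\}$ have equal Fourier transforms on $\hat{M}^3$, hence $L=R$; associativity for arbitrary nonzero $x,y,z$ then follows by dividing through by the common nonzero value of $(x\oplus y)\oplus z$ and invoking homogeneity, the case where this value is $0$ being deduced from the other grouping. This establishes that $F$ is a field, and the formula for $J$ was already recorded above.
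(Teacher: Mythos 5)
Your proposal is correct and follows essentially the same route as the paper: identify the solution set of $x\oplus y=1$ with the graph of $i$ to get the formula for $J$, verify commutativity, distributivity and inverses directly from the identities $i(t)=t\,i(t^{-1})$ and $c^2=1$, and prove associativity by running the verification of (B) in reverse and comparing Fourier transforms, with the same elimination argument for the value $0$. Your explicit reduction of $\Sigma=\Sigma'$ to $\gamma(c)\delta(\beta\gamma)=\beta(c)\delta(\beta\gamma)$ is just an unwound form of the paper's step equating $J(\beta,\beta^{-1})\delta(\beta\gamma)$ with $J(\gamma,\gamma^{-1})\delta(\beta\gamma)$.
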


\begin{remark}
Suppose we are in the classical case, where $J=\JJ$ is the Jacobi sum on a finite field $\FF$. From Remark~\ref{rmk:classical}, we know that $c=-1$ and $i(x)=x/(x+1)$. It follows that our $\oplus$ coincides with the usual addition law on $\FF$.
\end{remark}

The proposition will be proved in a series of lemmas.

\begin{lemma} \label{lem:a1}
We have the following:
\begin{enumerate}
\item The operation $\oplus$ is commutative.
\item The distributive law holds.
\item For every $x \in F$, the element $cx \in F$ is its inverse under $\oplus$.
\end{enumerate}
\end{lemma}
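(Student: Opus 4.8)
The plan is to verify each of the three identities by splitting into the two branches of the definition of $\oplus$ and reducing the nontrivial branch to the basic identity $i(x) = x\, i(x^{-1})$ from Lemma~\ref{lem:init-3}, together with the relation $c^2 = 1$. Throughout I would first dispose of the cases in which one of the arguments is $0$: these follow immediately from the conventions that $0$ is the identity for $\oplus$ and is absorbing for multiplication. So in what follows I assume all elements of $F$ that appear are nonzero, i.e.\ lie in $\hat{M}$.

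For part (a), I would argue as follows. If $x = cy$, then $c^2 = 1$ gives $cx = c^2 y = y$, so $y = cx$ as well; hence both $x \oplus y$ and $y \oplus x$ equal $0$. Otherwise $u := x/y$ lies in $S = \hat{M} \setminus \{c\}$, and since $S$ is closed under inversion so does $u^{-1} = y/x$; thus $i(u)$ and $i(u^{-1})$ are both defined. Commutativity then amounts to $x\, i(u)^{-1} = y\, i(u^{-1})^{-1}$, which after dividing by $y$ becomes $u\, i(u)^{-1} = i(u^{-1})^{-1}$, i.e.\ $i(u) = u\, i(u^{-1})$ --- exactly the identity of Lemma~\ref{lem:init-3}.

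For part (c), the inverse claim is the quickest: for nonzero $x$ the value $x \oplus (cx)$ falls into the first branch precisely when $x = c(cx) = c^2 x = x$, which always holds, so $x \oplus (cx) = 0$. For part (b), I would use that for fixed nonzero $z$ the map $w \mapsto zw$ is a bijection of $\hat{M}$ preserving ratios, so that $(zx)/(zy) = x/y$. Consequently $x = cy$ holds iff $zx = c(zy)$, hence $x \oplus y = 0$ iff $(zx) \oplus (zy) = 0$; in that case both sides of the distributive law are $0$, and otherwise $z(x \oplus y) = zx\, i(x/y)^{-1} = (zx)\, i((zx)/(zy))^{-1} = (zx) \oplus (zy)$.

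The point to emphasize is that there is no real obstacle here: the substantive work --- producing the function $i$ and establishing both $i(x) = x\,i(x^{-1})$ and $c^2 = 1$ --- was already carried out in \S\ref{s:initial}, and this lemma is just the formal verification that the resulting operation $\oplus$ is well behaved. The only item requiring care is the bookkeeping of domains, namely checking in each nontrivial branch that the arguments fed to $i$ genuinely lie in $S$; this is handled uniformly by the observation that $x \oplus y = 0$ is exactly the condition $x/y = c$, so the complementary case always has $x/y \in S$.
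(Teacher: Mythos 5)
Your proposal is correct and follows essentially the same route as the paper's proof: dispose of the zero cases, note that $x=cy$ iff $y=cx$ (via $c^2=1$) so the vanishing branches match, and reduce commutativity in the main branch to the identity $i(z)=z\,i(z^{-1})$ from Lemma~\ref{lem:init-3}, with distributivity following from $(zx)/(zy)=x/y$ and part (c) directly from the definition. The extra bookkeeping you do about the arguments of $i$ lying in $S$ is a harmless (and welcome) elaboration of what the paper leaves implicit.
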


\begin{proof}
(a) It is clear that $x \oplus y = y \oplus x$ if either $x$ or $y$ is equal to~0. If $x/y=c$ then $y/x=c$ as well, since $c^2=1$, and so $x \oplus y = y \oplus x = 0$. Finally, suppose that $x$ and $y$ are not zero, and $x/y \ne c$. Then
\begin{displaymath}
x \oplus y = xi(x/y)^{-1} = x( (x/y) i(y/x))^{-1} = y \oplus x,
\end{displaymath}
where in the second step we used the identity $i(z)=zi(z^{-1})$.

(b) We must show
\begin{displaymath}
z (x \oplus y) = (zx) \oplus (zy)
\end{displaymath}
for all $x,y,z \in F$. If any of $x$, $y$, or $z$ vanishes, then the result is clear; thus suppose this is not the case. If $x/y=c$ then $(zx)/(zy)=c$, and both sides are~0. Suppose now that $x/y \ne c$, which implies $(zx)/(zy) \ne c$. Then
\begin{displaymath}
z(x \oplus y) = zx i(x/y)^{-1}=(zx)i((zx)/(zy))^{-1} = (zx) \oplus (zy),
\end{displaymath}
and so the result follows.

(c) follows directly from the definition.
\end{proof}

\begin{lemma} \label{lem:a2}
We have
\begin{displaymath}
J(\alpha, \beta) = \frac{1}{m} \sum_{\substack{x \oplus y = 1,\\ x,y \in F \setminus \{0\}}} \alpha(x) \beta(y).
\end{displaymath}
\end{lemma}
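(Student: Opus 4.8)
The plan is to prove the identity by exhibiting a bijection between the index set $S$ of the formula from Proposition~\ref{prop:initial} and the set of pairs appearing on the right-hand side, and then matching summands termwise. Writing the formula from case~(a) of Proposition~\ref{prop:initial} as
\[
J(\alpha, \beta) = \frac{1}{m} \sum_{s \in S} \alpha(i(s)) \beta(i(s) s^{-1}),
\]
where I use $s$ for the dummy variable to distinguish it from field elements, let $P \subset (F \setminus \{0\})^2$ denote the set of pairs $(x,y)$ with $x \oplus y = 1$, so that the target sum is $\frac{1}{m} \sum_{(x,y) \in P} \alpha(x) \beta(y)$. It therefore suffices to show that the map $\phi \colon S \to (F \setminus \{0\})^2$ defined by $\phi(s) = (i(s),\, i(s) s^{-1})$ is a bijection onto $P$.

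First I would check that $\phi$ lands in $P$. Given $s \in S = \hat{M} \setminus \{c\}$, put $x = i(s)$ and $y = i(s) s^{-1}$; both are nonzero since $i$ takes values in $\hat{M}$. As $x/y = s \neq c$, the definition of $\oplus$ places us in the ``otherwise'' branch, giving $x \oplus y = x \cdot i(x/y)^{-1} = i(s) \cdot i(s)^{-1} = 1$, so $(x,y) \in P$. Injectivity of $\phi$ is immediate, since $s = x/y$ is recovered from $\phi(s) = (x,y)$.

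The one step that requires care is surjectivity, since it is here that one must reason through the two branches in the definition of $\oplus$. Given $(x,y) \in P$, set $s = x/y$. Because $x \oplus y = 1 \neq 0$, we cannot be in the branch $x = cy$, and hence $x/y \neq c$, so $s \in S$. The ``otherwise'' formula then reads $x \cdot i(s)^{-1} = x \oplus y = 1$, forcing $i(s) = x$, and consequently $i(s) s^{-1} = x \cdot (x/y)^{-1} = y$. Thus $\phi(s) = (x,y)$, so $\phi$ is a bijection onto $P$. Substituting $(x,y) = \phi(s)$ into the target sum converts it termwise into the sum over $S$ displayed above, which equals $J(\alpha,\beta)$, completing the proof.
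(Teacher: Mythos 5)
Your proof is correct and follows essentially the same route as the paper: the paper likewise shows that the set $\{(x,y) \in \hat{M}^2 \mid x \oplus y = 1\}$ coincides with $\{(i(z), i(z)z^{-1}) \mid z \in \hat{M} \setminus \{c\}\}$ by checking both inclusions, and then matches the sums termwise. Your additional explicit verification of injectivity is harmless (the paper records this fact just after Lemma~\ref{lem:init-3}), and the handling of the two branches of $\oplus$ is the same in both arguments.
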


\begin{proof}
Let
\begin{align*}
X &= \{ (x,y) \in \hat{M}^2 \mid x \oplus y=1\}, \\
Y &= \{ (i(z), i(z)z^{-1}) \mid z \in \hat{M} \setminus \{c\} \}.
\end{align*}
We claim $X=Y$. Indeed, if $(x,y) \in X$ then $x/y \ne c$ and $xi(x/y)^{-1}=1$, and so $x=i(x/y)$. Thus, putting $z=x/y$, we have $x=i(z)$ and $y=i(z)z^{-1}$, and so $(x,y) \in Y$. On the other hand, if $z \in \hat{M} \setminus \{c\}$ and we put $x=i(z)$ and $y=i(z)z^{-1}$ then $xi(x/y)^{-1}=1$, and so $(x,y) \in X$. This establishes the claim. We thus have
\begin{displaymath}
\frac{1}{m} \sum_{\substack{x \oplus y = 1,\\ x,y \in F \setminus \{0\}}} \alpha(x) \beta(y) = \frac{1}{m} \sum_{z \in \hat{M} \setminus \{c\}} \alpha(i(z)) \beta(i(z)z^{-1}) = J(\alpha, \beta),
\end{displaymath}
and so the result follows.
\end{proof}

\begin{lemma} \label{lem:a3}
For any $\alpha, \beta, \gamma \in M$, we have
\begin{displaymath}
\sum_{\substack{(x \oplus y) \oplus z=1, \\ x,y,z \in F \setminus \{0\}}} \alpha(x) \beta(y) \gamma(z) =
\sum_{\substack{x \oplus (y \oplus z)=1, \\ x,y,z \in F \setminus \{0\}}} \alpha(x) \beta(y) \gamma(z)
\end{displaymath}
\end{lemma}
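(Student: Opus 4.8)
The plan is to adapt the computation that verifies condition (B) classically (Proposition~\ref{prop:jacobi2}): I will evaluate each side of the asserted identity as an iterated character sum, peeling off an inner ``partial addition'' sum, and show that the left and right sides reduce precisely to the two sides of the reformulated condition (B) in Proposition~\ref{prop:reform}. Since $J$ satisfies (B), those two quantities agree, which gives the lemma.

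Concretely, for $w \in F$ and $\alpha, \beta \in M$ I would introduce
\[
T_{\alpha, \beta}(w) = \sum_{\substack{x \oplus y = w,\\ x, y \in F \setminus \{0\}}} \alpha(x) \beta(y),
\]
and evaluate it in two cases. For $w \ne 0$, multiplication by $w$ is a bijection of $F \setminus \{0\}$, and by the distributive law (Lemma~\ref{lem:a1}(b)) the substitution $x = wx'$, $y = wy'$ turns the condition $x \oplus y = w$ into $x' \oplus y' = 1$; together with Lemma~\ref{lem:a2} this yields $T_{\alpha, \beta}(w) = m\, (\alpha\beta)(w)\, J(\alpha, \beta)$. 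For $w = 0$, the definition of $\oplus$ gives $x \oplus y = 0$ exactly when $x = cy$, so $T_{\alpha, \beta}(0) = \alpha(c) \sum_{y \in \hat{M}} (\alpha\beta)(y) = m\, \alpha(c)\, \delta(\alpha\beta)$ by orthogonality of characters. Alongside this I would record the key auxiliary identity $\theta(c) = m(\delta(\theta) - J(\theta, \theta^{-1}))$, obtained by computing $J(\theta, \theta^{-1})$ from Lemma~\ref{lem:a2} as $\tfrac{1}{m}\sum_{z \in \hat{M} \setminus \{c\}} \theta(z)$.

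Grouping the left-hand side $L$ of the lemma over $w = x \oplus y$ gives
\[
L = \sum_{w \in F} \ \sum_{\substack{w \oplus z = 1,\\ z \ne 0}} \gamma(z)\, T_{\alpha, \beta}(w).
\]
The term $w = 0$ forces $z = 1$ and contributes $m\, \alpha(c)\, \delta(\alpha\beta)$, while the terms $w \ne 0$ contribute $m^2 J(\alpha, \beta) J(\alpha\beta, \gamma)$, again by Lemma~\ref{lem:a2}. Substituting $\alpha(c) = m(\delta(\alpha) - J(\alpha, \alpha^{-1}))$ and using $\delta(\alpha)\delta(\alpha\beta) = \delta(\alpha)\delta(\beta)$, I find that $m^{-2} L$ equals $J(\alpha, \beta) J(\alpha\beta, \gamma) - J(\alpha, \alpha^{-1}) \delta(\alpha\beta) + \delta(\alpha)\delta(\beta)$, which is the left side of Proposition~\ref{prop:reform}.

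Running the identical argument on the right-hand side $R$, now peeling off $w = y \oplus z$, gives $R = m^2 J(\alpha, \beta\gamma) J(\beta, \gamma) + m\, \beta(c)\, \delta(\beta\gamma)$; converting the boundary term via $\beta(c) = m(\delta(\beta) - J(\beta, \beta^{-1}))$ and rewriting $J(\beta, \beta^{-1}) = J(\gamma, \gamma^{-1})$ under the constraint $\beta = \gamma^{-1}$ (using condition (A) and $c^2 = 1$) shows that $m^{-2} R$ equals $J(\alpha, \beta\gamma) J(\beta, \gamma) - J(\gamma, \gamma^{-1}) \delta(\beta\gamma) + \delta(\beta)\delta(\gamma)$, the right side of Proposition~\ref{prop:reform}. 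Since $J$ satisfies (B), that proposition gives $L = R$. I expect the only real subtlety to be the bookkeeping of the $w = 0$ boundary term and its conversion, via $\theta(c) = m(\delta(\theta) - J(\theta, \theta^{-1}))$, into exactly the $\delta$-corrections appearing in the reformulated condition (B); the remaining scaling and orthogonality steps are routine.
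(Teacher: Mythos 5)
Your proposal is correct and follows essentially the same route as the paper: decompose each side over the partial sum $w = x \oplus y$ (resp.\ $w = y \oplus z$), use the distributive law and Lemma~\ref{lem:a2} for $w \ne 0$, convert the $w = 0$ boundary term via $J(\theta,\theta^{-1}) = \delta(\theta) - \theta(c)/m$, and conclude from Proposition~\ref{prop:reform}. The only differences are cosmetic normalizations (your $T_{\alpha,\beta} = mS$) and your explicit use of (A) to match $J(\beta,\beta^{-1})$ with $J(\gamma,\gamma^{-1})$ under $\delta(\beta\gamma)$, which the paper leaves implicit.
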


\begin{proof}
We argue as in the proof of Proposition~\ref{prop:jacobi2}, except in reverse. Let $A$ be the left side of the equation divided by $m^2$, and similarly define $B$ for the right side. We have
\begin{displaymath}
A = \frac{1}{m} \sum_{\substack{w \oplus z = 1,\\ w \in F, z \in F \setminus \{0\}}} S(w) \gamma(z), \qquad
S(w) = \frac{1}{m} \sum_{\substack{x \oplus y=w, \\ x,y \in F \setminus \{0\}}} \alpha(x) \beta(y).
\end{displaymath}
If $w \ne 0$ then $S(w)=(\alpha \beta)(w) J(\alpha, \beta)$, just as in the proof of Proposition~\ref{prop:jacobi2}; the key point here is that the distributive law holds. From the definition of $\oplus$, it is clear that $x \oplus y=0$ if and only if $x=cy$. Thus
\begin{displaymath}
S(0) = \frac{1}{m} \sum_{y \in \hat{M}} \alpha(cy) \beta(y)
= \frac{\alpha(c) \delta(\alpha \beta)}{m}.
\end{displaymath}
We note that
\begin{displaymath}
J(\alpha, \alpha^{-1}) = \frac{1}{m} \sum_{x \in \hat{M} \setminus \{c\}} \alpha(x) = \delta(\alpha) - \frac{\alpha(c)}{m},
\end{displaymath}
and so
\begin{displaymath}
S(0) = \delta(\alpha) \delta(\beta) - J(\alpha, \alpha^{-1}) \delta(\alpha \beta).
\end{displaymath}
Putting this all together, we find
\begin{displaymath}
A = J(\alpha, \beta) J(\alpha \beta, \gamma) - J(\beta, \beta^{-1}) \delta(\alpha \beta) + \delta(\alpha) \delta(\beta).
\end{displaymath}
A similar computation gives
\begin{displaymath}
B = J(\beta, \gamma) J(\alpha, \beta \gamma) - J(\beta, \beta^{-1}) \delta(\beta \gamma) + \delta(\beta) \delta(\gamma).
\end{displaymath}
The equality $A=B$ is now a consequence of condition (B) and Proposition~\ref{prop:reform}.
\end{proof}

\begin{lemma} \label{lem:a4}
The operation $\oplus$ is associative.
\end{lemma}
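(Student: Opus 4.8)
The plan is to read off associativity from Lemma~\ref{lem:a3} by Fourier inversion, and then to bootstrap from the special value~1 to arbitrary values using the distributive law. First I would set
\begin{align*}
L &= \{(x,y,z) \in \hat{M}^3 \mid (x \oplus y) \oplus z = 1\}, \\
R &= \{(x,y,z) \in \hat{M}^3 \mid x \oplus (y \oplus z) = 1\},
\end{align*}
and observe that Lemma~\ref{lem:a3} says precisely that the indicator functions of $L$ and $R$ have equal pairing against $\alpha(x)\beta(y)\gamma(z)$ for every $(\alpha, \beta, \gamma) \in M^3$. Using $M = \widehat{\hat{M}}$, as $(\alpha, \beta, \gamma)$ ranges over $M^3$ these products run over all characters of the group $\hat{M}^3$, so by injectivity of the Fourier transform we get $L = R$. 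In other words, for all nonzero $x, y, z \in F$,
\begin{displaymath}
(x \oplus y) \oplus z = 1 \iff x \oplus (y \oplus z) = 1.
\end{displaymath}

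The main point is then to upgrade this equivalence at the single value~1 to a genuine identity of elements, and here the distributive law (Lemma~\ref{lem:a1}(b)) does the work. Fix nonzero $x, y, z$ and suppose $u = (x \oplus y) \oplus z$ is nonzero. Scaling by $u^{-1} \in \hat{M}$ and applying distributivity twice gives $(u^{-1}x \oplus u^{-1}y) \oplus u^{-1}z = u^{-1}u = 1$; since $u^{-1}x, u^{-1}y, u^{-1}z$ are all nonzero, the equivalence above yields $u^{-1}\bigl(x \oplus (y \oplus z)\bigr) = u^{-1}x \oplus (u^{-1}y \oplus u^{-1}z) = 1$. A product equal to~1 cannot have a zero factor, so $x \oplus (y \oplus z)$ is nonzero, and cancelling $u^{-1}$ in $\hat{M}$ gives $x \oplus (y \oplus z) = u$. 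The symmetric argument, scaling by $v^{-1}$ when $v = x \oplus (y \oplus z)$ is nonzero, gives the reverse implication; hence $(x \oplus y) \oplus z$ and $x \oplus (y \oplus z)$ are simultaneously zero or simultaneously nonzero and equal, and either way they agree.

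Finally I would dispatch the cases in which one of $x, y, z$ equals~0 directly, using that~0 is the identity for $\oplus$: each such case collapses both associations to the same binary expression. This completes the verification. The only genuine obstacle is the bootstrapping step: Lemma~\ref{lem:a3} controls only the relation ``$\,\cdot \oplus \cdot = 1$,'' and it is the homogeneity supplied by distributivity that propagates this to an identity of elements; the zero cases must be separated out by hand, since scaling by $u^{-1}$ is available only when $u \ne 0$.
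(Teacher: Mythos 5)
Your proposal is correct and follows essentially the same route as the paper: Fourier inversion applied to Lemma~\ref{lem:a3} gives the equivalence at the value~$1$, and the distributive law transports it to every nonzero value, with the zero case then forced by elimination. The paper phrases the bootstrap as ``$(x \oplus y) \oplus z = a \iff x \oplus (y \oplus z) = a$ for all $a \ne 0$'' rather than scaling by $u^{-1}$ explicitly, but this is the same argument.
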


\begin{proof}
Put
\begin{align*}
X &= \{ (x,y,z) \in (F \setminus \{0\})^3 \mid (x \oplus y) \oplus z = 1\} \\
Y &= \{ (x,y,z) \in (F \setminus \{0\})^3 \mid x \oplus (y \oplus z) = 1\}.
\end{align*}
The previous lemma shows that the indicator functions of $X$ and $Y$ have equal Fourier transform. By Fourier inversion, it follows that $X=Y$. We thus find
\begin{displaymath}
(x \oplus y) \oplus z = 1 \iff x \oplus (y \oplus z) = 1
\end{displaymath}
if $x,y,z \in F$ are non-zero. Of course, if any of $x$, $y$, or $z$ vanishes then this obviously holds. Since the distributive law holds, for $a \in F \setminus \{0\}$ we find
\begin{displaymath}
(x \oplus y) \oplus z = a \iff x \oplus (y \oplus z) = a.
\end{displaymath}
But this implies that the same statement holds for $a=0$, since this is the only remaining value; indeed, if $(x \oplus y) \oplus z=0$ then the above shows that $x \oplus (y \oplus z)$ cannot be any element of $F \setminus \{0\}$, and so it must be zero. This completes the proof.
\end{proof}

\section{The second case} \label{s:b}

Let $J$ be a Jacobi function on $M$, and suppose we are in the second case of Propopsition~\ref{prop:initial}. Thus there is a bijection $i \colon \hat{M} \to \hat{M}$ as in the proposition. Our goal is to prove the following:

\begin{proposition}
The group $M$ is trivial.
\end{proposition}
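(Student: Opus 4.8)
The plan is to mirror the construction of \S\ref{s:a}: build an addition law $\oplus$ on $\hat M$ and adjoin $0$, but then exploit the feature distinguishing this case — that $\oplus$ never produces the adjoined zero — to force $M$ trivial. First I would record the simplifications special to case~(b). Since $S=\hat M$, the map $i$ is a bijection, and $i(x)=x\,i(x^{-1})$, a direct computation gives
\[
J(\alpha,\alpha^{-1})=\frac1m\sum_{x\in\hat M}\alpha(i(x))\,\alpha^{-1}(i(x)x^{-1})=\frac1m\sum_{x\in\hat M}\alpha(x)=\delta(\alpha).
\]
Feeding this into the reformulation of~(B) (Proposition~\ref{prop:reform}), all $\delta$-correction terms cancel, so here condition~(B) is equivalent to the clean multiplicative identity $J(\alpha,\beta)J(\alpha\beta,\gamma)=J(\alpha,\beta\gamma)J(\beta,\gamma)$.

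Next I would define $\oplus$ on $F=\hat M\sqcup\{0\}$ by taking $0$ as the additive identity and setting $x\oplus y=x\,i(x/y)^{-1}$ for $x,y\in\hat M$. The crucial structural point is that $x\oplus y$ always lies in $\hat M$: there is no excluded element $c$, so a sum of nonzero elements is never $0$. Commutativity and distributivity follow exactly as in Lemma~\ref{lem:a1} (using $i(z)=z\,i(z^{-1})$ and the group law), and the same argument as Lemma~\ref{lem:a2} gives the representation $J(\alpha,\beta)=\frac1m\sum_{x\oplus y=1,\ x,y\neq0}\alpha(x)\beta(y)$. For associativity I would rerun Lemmas~\ref{lem:a3}–\ref{lem:a4}: the inner sum $S(0)$ now vanishes identically (since $x\oplus y=0$ has no solutions with $x,y$ nonzero), so the triple-sum identity reduces directly to the multiplicative form of~(B) above, whence Fourier inversion yields $(x\oplus y)\oplus z=1\iff x\oplus(y\oplus z)=1$; distributivity then upgrades this to full associativity, the value $0$ never arising for nonzero inputs. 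Thus $F$ is a commutative semiring in which $(\hat M,\cdot)$ is the group of units, $(F,\oplus)$ is a commutative monoid, and no nonzero element has an additive inverse.

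The final step produces the contradiction, and it is where the two cases genuinely diverge. Let $\Sigma=\bigoplus_{x\in\hat M}x$, the $\oplus$-sum of all units. Since any $\oplus$-sum of nonzero elements is again nonzero, $\Sigma\in\hat M$. For each $a\in\hat M$, multiplication by $a$ permutes $\hat M$, so by distributivity $a\Sigma=\bigoplus_{x\in\hat M}ax=\bigoplus_{y\in\hat M}y=\Sigma$; cancelling $\Sigma$ in the group $\hat M$ gives $a=1$. As $a$ was arbitrary, $\hat M$, and hence $M$, is trivial.

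The main obstacle is isolating the right invariant in this last step: conditions (A)–(C) alone only pin down the multiset underlying $J$, and the entire force of this case lies in the \emph{failure} of $\oplus$ to admit additive inverses. The element $\Sigma$ converts that failure into a usable statement, since multiplication by a unit must fix $\Sigma$ yet acts without fixed points on $\hat M$. Establishing associativity — needed only to make $\Sigma$ well defined — is routine once one observes that the correction terms of~(B) vanish, so the genuine content is the invariance argument for $\Sigma$.
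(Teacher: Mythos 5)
Your proposal is correct, and up through associativity it follows the paper's route essentially verbatim: the paper likewise computes $J(\alpha,\alpha^{-1})=\frac{1}{m}\sum_{x\in\hat M}\alpha(x)=\delta(\alpha)$, defines $x\oplus y=x\,i(x/y)^{-1}$ with no excluded case, and reruns Lemmas~\ref{lem:a1}--\ref{lem:a4} with the $S(0)$ term absent. The genuine divergence is in the endgame. The paper first proves $\oplus$ is cancellative (from bijectivity of $i$), deduces that each translation $t_x(y)=x\oplus y$ is a permutation of the finite set $\hat M$, hence $t_x^{m!}=\mathrm{id}$ and $m!\cdot x$ is an additive identity $0$ lying \emph{inside} $\hat M$; then the equation $x=x\oplus 0=x\,i(x/0)^{-1}$ forces $i(x/0)=1$, i.e. $x=0\cdot i^{-1}(1)$ for every $x$, so $\hat M$ is a single point. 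You instead form $\Sigma=\bigoplus_{x\in\hat M}x$, note $\Sigma\in\hat M$ because $\oplus$ never leaves $\hat M$, and use distributivity plus the fact that multiplication by $a$ permutes $\hat M$ to get $a\Sigma=\Sigma$, whence $a=1$ by cancelling in the multiplicative group. Your ending is slightly leaner: it needs only commutativity, associativity, and distributivity, and dispenses entirely with cancellativity of $\oplus$ and the construction of an additive identity. The paper's ending, by contrast, makes the source of the obstruction explicit — the would-be zero of the semiring is forced to be a unit — which is perhaps more illuminating about why case (b) cannot support a nontrivial group. Both arguments are complete.
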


This will complete the proof of the main theorem. To prove the proposition, we follow the same general plan as in \S \ref{s:a}, at least initially. We define a binary operation $\oplus$ on $\hat{M}$ by
\begin{displaymath}
x \oplus y = xi(x/y)^{-1}.
\end{displaymath}
Unlike \S \ref{s:a}, there are no edge cases in this definition.

\begin{lemma}
We have the following:
\begin{enumerate}
\item The operation $\oplus$ is commutative.
\item The distributive law holds.
\item The operation $\oplus$ is cancellative, i.e., $x \oplus y = x \oplus z$ implies $y=z$.
\end{enumerate}
\end{lemma}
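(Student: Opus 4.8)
The plan is to verify the three properties by direct computation, mirroring the corresponding parts of Lemma~\ref{lem:a1} from the first case. The key simplification here is that, since we are in the second case of Proposition~\ref{prop:initial}, the map $i$ is defined on all of $\hat{M}$ and is a bijection, so $\oplus$ is given by the single uniform formula $x \oplus y = x\,i(x/y)^{-1}$ with no edge cases; there is no element $c$ and no zero to treat separately, which makes each argument cleaner than its \S\ref{s:a} analogue.

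For part (a), I would start from $x \oplus y = x\,i(x/y)^{-1}$ and apply the identity $i(z) = z\,i(z^{-1})$ established in Lemma~\ref{lem:init-3}, taking $z = x/y$ (so that $z^{-1} = y/x$). Substituting $i(x/y) = (x/y)\,i(y/x)$ and simplifying in the abelian group $\hat{M}$ yields $x\,(y/x)\,i(y/x)^{-1} = y\,i(y/x)^{-1} = y \oplus x$. This is the only part of the lemma that uses a nontrivial input beyond the definition of $\oplus$, namely the inversion identity for $i$, so if any step warrants care it is this one; the computation is nonetheless short.

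For part (b), I would unwind both sides of the desired equation $z(x \oplus y) = (zx) \oplus (zy)$: the left-hand side is $zx\,i(x/y)^{-1}$, while the right-hand side is $(zx)\,i((zx)/(zy))^{-1}$, and the two coincide because $(zx)/(zy) = x/y$. For part (c), I would use that $i$ is injective (being a bijection in this case): from $x \oplus y = x \oplus z$ we cancel the common factor $x$ to obtain $i(x/y) = i(x/z)$, whence $x/y = x/z$ and therefore $y = z$.

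I do not anticipate any genuine obstacle. All three statements reduce to elementary manipulations in the finite abelian group $\hat{M}$, and the absence of edge cases makes them strictly easier than the first-case versions, where the element $c$ and the adjoined zero forced a case analysis. The only structural facts invoked are the inversion identity $i(z)=z\,i(z^{-1})$ and the injectivity of $i$, both already available.
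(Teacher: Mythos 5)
Your proof is correct and follows essentially the same route as the paper: the paper handles (a) and (b) by citing the computations of Lemma~\ref{lem:a1} (which are exactly the manipulations you write out, using $i(z)=z\,i(z^{-1})$ for commutativity), and proves (c) by the same cancellation-plus-injectivity argument you give.
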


\begin{proof}
The first two statements are proved as in Lemma~\ref{lem:a1}. Now suppose we have $x$, $y$, and $z$ as in (c). Then $xi(x/y)^{-1}=xi(x/z)^{-1}$. Since $M$ is a group and $i$ is bijective, it follows that $y=z$.
\end{proof}

\begin{lemma}
We have
\begin{displaymath}
J(\alpha, \beta) = \frac{1}{m} \sum_{\substack{x \oplus y = 1,\\ x,y \in \hat{M}}} \alpha(x) \beta(y).
\end{displaymath}
\end{lemma}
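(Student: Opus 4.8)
The plan is to repeat the argument of Lemma~\ref{lem:a2} essentially verbatim, the only simplification being that in the present case $\oplus$ has no edge cases and $i$ is defined on all of $\hat{M}$. Recall that Lemmas~\ref{lem:init-2} and~\ref{lem:init-3}, together with the hypothesis that we are in case~(b), give $S=\hat{M}$ and hence
\begin{displaymath}
J(\alpha, \beta) = \frac{1}{m} \sum_{z \in \hat{M}} \alpha(i(z)) \beta(i(z) z^{-1}).
\end{displaymath}
It therefore suffices to match the index set of this sum with the set of pairs $(x,y)$ satisfying $x \oplus y = 1$.

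To do this, I would set
\begin{align*}
X &= \{ (x,y) \in \hat{M}^2 \mid x \oplus y = 1 \}, \\
Y &= \{ (i(z), i(z) z^{-1}) \mid z \in \hat{M} \},
\end{align*}
and prove $X = Y$. For the inclusion $X \subseteq Y$, observe that $x \oplus y = x\, i(x/y)^{-1} = 1$ forces $x = i(x/y)$; putting $z = x/y$ then yields $x = i(z)$ and $y = x z^{-1} = i(z) z^{-1}$, so $(x,y) \in Y$. Conversely, for any $z \in \hat{M}$ we set $x = i(z)$ and $y = i(z) z^{-1}$, note that $x/y = z$, and compute $x \oplus y = x\, i(x/y)^{-1} = i(z)\, i(z)^{-1} = 1$, so $(x,y) \in X$.

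Finally, since $i$ is a bijection the first coordinate $i(z)$ determines $z$, so the parametrization $z \mapsto (i(z), i(z) z^{-1})$ of $Y$ is injective. Hence
\begin{displaymath}
\frac{1}{m} \sum_{\substack{x \oplus y = 1,\\ x,y \in \hat{M}}} \alpha(x) \beta(y) = \frac{1}{m} \sum_{z \in \hat{M}} \alpha(i(z)) \beta(i(z) z^{-1}) = J(\alpha, \beta),
\end{displaymath}
which is the claimed identity. I do not expect any genuine obstacle here: because case~(b) involves no distinguished element $c$ and $\oplus$ is everywhere single-valued, the edge-case bookkeeping that appeared in Lemma~\ref{lem:a2} simply disappears, and the identity $X = Y$ becomes even more transparent.
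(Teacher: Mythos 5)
Your proof is correct and is exactly what the paper intends: the paper's proof of this lemma simply says it is proved like Lemma~\ref{lem:a2}, and your argument is that proof carried out verbatim with the edge cases removed.
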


\begin{proof}
This is proved like Lemma~\ref{lem:a2}.
\end{proof}

\begin{lemma}
For any $\alpha, \beta, \gamma \in M$, we have
\begin{displaymath}
\sum_{\substack{(x \oplus y) \oplus z=1, \\ x,y,z \in \hat{M}}} \alpha(x) \beta(y) \gamma(z) =
\sum_{\substack{x \oplus (y \oplus z)=1, \\ x,y,z \in \hat{M}}} \alpha(x) \beta(y) \gamma(z)
\end{displaymath}
\end{lemma}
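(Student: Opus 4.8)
The plan is to reprise the proof of Lemma~\ref{lem:a3}, which simplifies substantially here because $\hat{M}$ has no zero element and so there are no edge cases to track. Let $A$ and $B$ denote the left and right sides of the claimed identity, each divided by $m^2$. Grouping the triple sum defining $A$ according to the value $w = x \oplus y$, I would write
\begin{displaymath}
A = \frac{1}{m} \sum_{\substack{w \oplus z = 1,\\ w,z \in \hat{M}}} S(w) \gamma(z), \qquad
S(w) = \frac{1}{m} \sum_{\substack{x \oplus y = w,\\ x,y \in \hat{M}}} \alpha(x) \beta(y).
\end{displaymath}

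The first step is to show that $S(w) = (\alpha\beta)(w) J(\alpha, \beta)$ for every $w \in \hat{M}$. Substituting $x = w x'$ and $y = w y'$ and using the distributive law, the condition $x \oplus y = w$ becomes $w(x' \oplus y') = w$, which forces $x' \oplus y' = 1$ by cancellation in the group $\hat{M}$; the formula then follows from the identity $J(\alpha, \beta) = \frac{1}{m} \sum_{x \oplus y = 1} \alpha(x) \beta(y)$ established above. The crucial difference from Lemma~\ref{lem:a3} is that this now holds uniformly in $w$, with no exceptional contribution from a term $w = 0$. Substituting back gives $A = J(\alpha, \beta) J(\alpha\beta, \gamma)$, and grouping the sum for $B$ by $y \oplus z$ in the same way gives $B = J(\beta, \gamma) J(\alpha, \beta\gamma)$.

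It remains to verify $J(\alpha, \beta) J(\alpha\beta, \gamma) = J(\alpha, \beta\gamma) J(\beta, \gamma)$. By Proposition~\ref{prop:reform}, condition (B) shows that $A - B$ equals
\begin{displaymath}
\big(J(\alpha, \alpha^{-1}) \delta(\alpha\beta) - \delta(\alpha)\delta(\beta)\big) - \big(J(\gamma, \gamma^{-1}) \delta(\beta\gamma) - \delta(\beta)\delta(\gamma)\big),
\end{displaymath}
so I must check that each bracketed term vanishes. Setting $\beta = \alpha^{-1}$ in the formula of Proposition~\ref{prop:initial} with $S = \hat{M}$, the summand collapses to $\alpha(x)$, whence $J(\alpha, \alpha^{-1}) = \frac{1}{m} \sum_{x \in \hat{M}} \alpha(x) = \delta(\alpha)$ by orthogonality. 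Therefore $J(\alpha, \alpha^{-1}) \delta(\alpha\beta) = \delta(\alpha)\delta(\alpha\beta) = \delta(\alpha)\delta(\beta)$, so the first bracket vanishes, and the second vanishes identically; thus $A = B$. I expect no serious obstacle here: the argument is a streamlined version of Lemma~\ref{lem:a3}, and the only points needing care are the uniform validity of $S(w) = (\alpha\beta)(w) J(\alpha,\beta)$ (where the absence of a $w = 0$ term, rather than the first case's analysis of $x \oplus y = 0$, does the work) and the vanishing of the two correction terms, which rests on the identity $J(\alpha, \alpha^{-1}) = \delta(\alpha)$ special to this second case.
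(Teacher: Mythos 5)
Your proposal is correct and follows the paper's proof essentially verbatim: the same decomposition into $A$ and $B$ via the inner sum $S(w)$, the same observation that the absence of a $w=0$ case gives $S(w)=(\alpha\beta)(w)J(\alpha,\beta)$ uniformly, and the same appeal to $J(\alpha,\alpha^{-1})=\delta(\alpha)$ together with Proposition~\ref{prop:reform} to conclude $A=B$. You merely spell out the cancellation of the correction terms a bit more explicitly than the paper does.
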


\begin{proof}
The proof is similar to that of Lemma~\ref{lem:a3}, but there are a few differences worth noting. As before, let $A$ and $B$ be the two sides of the equation, divided by $m^2$. We have
\begin{displaymath}
A = \frac{1}{m} \sum_{\substack{w \oplus z = 1,\\ w,z \in \hat{M}}} S(w) \gamma(z), \qquad
S(w) = \frac{1}{m} \sum_{\substack{x \oplus y=w, \\ x,y \in \hat{M}}} \alpha(x) \beta(y).
\end{displaymath}
The main difference to the previous proof is that there is no $w=0$ case here. Thus $S(w)=(\alpha \beta)(w) J(\alpha, \beta)$ for all $w$, and we find
\begin{displaymath}
A = J(\alpha, \beta) J(\alpha \beta, \gamma).
\end{displaymath}
Similarly,
\begin{displaymath}
B = J(\alpha, \beta \gamma) J(\beta, \gamma).
\end{displaymath}
Since
\begin{displaymath}
J(\alpha, \alpha^{-1})=\frac{1}{m} \sum_{x \in \hat{M}} \alpha(x) = \delta(\alpha)
\end{displaymath}
holds for all $\alpha \in M$, Proposition~\ref{prop:reform} shows $A=B$.
\end{proof}

\begin{lemma}
The operation $\oplus$ is associative.
\end{lemma}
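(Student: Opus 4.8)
The plan is to follow the proof of Lemma~\ref{lem:a4} almost verbatim, with the simplification afforded by the absence of a zero element. First I would define
\begin{align*}
X &= \{ (x,y,z) \in \hat{M}^3 \mid (x \oplus y) \oplus z = 1 \}, \\
Y &= \{ (x,y,z) \in \hat{M}^3 \mid x \oplus (y \oplus z) = 1 \},
\end{align*}
and observe that the preceding lemma asserts precisely that the Fourier transforms of the indicator functions of $X$ and $Y$, regarded as functions on $\hat{M}^3$, agree at every $(\alpha, \beta, \gamma) \in M^3$. By Fourier inversion on the product group $\hat{M}^3$ this forces $X = Y$, so that
\begin{displaymath}
(x \oplus y) \oplus z = 1 \iff x \oplus (y \oplus z) = 1
\end{displaymath}
for all $x, y, z \in \hat{M}$.

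Next I would upgrade this equivalence, valid at the single value $1$, to actual equality of the two products. Fix $x, y, z \in \hat{M}$ and write $p = (x \oplus y) \oplus z$ and $q = x \oplus (y \oplus z)$; both lie in $\hat{M}$ since $\oplus$ takes values in $\hat{M}$. Applying the distributive law twice gives $(tx \oplus ty) \oplus tz = tp$ and $tx \oplus (ty \oplus tz) = tq$ for every $t \in \hat{M}$, so the displayed equivalence applied to the triple $(tx, ty, tz)$ reads $tp = 1 \iff tq = 1$. Taking $t = p^{-1}$ makes the left side hold, whence $p^{-1} q = 1$, i.e. $p = q$. This is exactly associativity.

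There is essentially no obstacle here; the only point worth flagging is that, unlike in Lemma~\ref{lem:a4}, no separate treatment of a ``zero value'' is required. In case~(a) the operation $\oplus$ could return the adjoined element $0 \in F$, and the equivalence at the value $1$ had to be propagated to nonzero values by scaling and then to $0$ by an elimination argument. Here $\oplus$ is a genuine binary operation on $\hat{M}$, every element of which is a legitimate value, so the scaling step alone closes the argument. I would double-check only that the Fourier-inversion step is applied correctly to functions on $\hat{M}^3$, and that $x\,i(x/y)^{-1} \in \hat{M}$, so that $\oplus$ indeed lands in $\hat{M}$ and the scaling by $t = p^{-1}$ makes sense.
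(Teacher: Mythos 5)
Your proposal is correct and follows essentially the same route as the paper, which simply invokes the proof of Lemma~\ref{lem:a4}: Fourier inversion on $\hat{M}^3$ gives the equivalence at the value $1$, and the distributive law propagates it by scaling. Your observation that the absence of a zero element removes the final elimination step of Lemma~\ref{lem:a4} is exactly the (minor) simplification the paper has in mind.
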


\begin{proof}
The proof of Lemma~\ref{lem:a4} applies.
\end{proof}

\begin{lemma}
There is an identity element 0 for $\oplus$.
\end{lemma}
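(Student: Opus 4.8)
The plan is to recognize that the preceding lemmas of this section have already endowed $\hat{M}$ with the structure of a finite, commutative, associative, cancellative magma, and to invoke the classical fact that a finite cancellative semigroup is a group; in particular it possesses an identity element, which we name $0$. So the real work is just to observe that all the hypotheses are in place and to run the short translation argument.

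Concretely, I would fix any $a \in \hat{M}$ and consider the left-translation map $L_a \colon \hat{M} \to \hat{M}$, $x \mapsto a \oplus x$. Cancellativity says precisely that $L_a$ is injective, and since $\hat{M}$ is finite, $L_a$ is therefore a bijection. In particular there is a (unique) element $0 \in \hat{M}$ with $a \oplus 0 = a$.

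It then remains to upgrade this right identity for the single element $a$ into an identity for every element. Given an arbitrary $b \in \hat{M}$, surjectivity of $L_a$ provides some $c$ with $a \oplus c = b$, and then associativity together with commutativity gives
\[
b \oplus 0 = (a \oplus c) \oplus 0 = (a \oplus 0) \oplus c = a \oplus c = b.
\]
Since $\oplus$ is commutative, $0$ is automatically a two-sided identity, and the lemma follows.

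I do not anticipate any serious obstacle: finiteness of $\hat{M}$, associativity, commutativity, and cancellativity have all been established, and these are exactly the inputs the argument requires. The only point meriting a moment's care is the passage from "$0$ works for $a$" to "$0$ works for all of $\hat{M}$," which is the displayed computation above and rests on the surjectivity of $L_a$. I note that one could just as easily push the same reasoning further to produce $\oplus$-inverses and thereby realize $(\hat{M}, \oplus)$ as a genuine abelian group; that stronger statement, combined with the distributive law and cancellativity, will be what forces $M$ to be trivial in the remainder of the section.
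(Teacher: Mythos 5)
Your proof is correct and rests on the same key observation as the paper's: cancellativity plus finiteness makes the translation map $y \mapsto x \oplus y$ a bijection of $\hat{M}$. The only (immaterial) difference is how the identity is then extracted — the paper takes a power of the permutation $t_x$ to get $t_x^{m!} = \mathrm{id}$ and reads off $0 = m!\cdot x$, while you solve $a \oplus 0 = a$ directly and propagate via associativity and commutativity.
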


\begin{proof}
Let $x$ be any element of $\hat{M}$. The function $t_x \colon \hat{M} \to \hat{M}$ defined by $y \mapsto x \oplus y$ is injective (since $\oplus$ is cancellative), and therefore a bijection (since $\hat{M}$ is finite). Since $t_x$ is a permutation of the $m$-element set $\hat{M}$, it follows that $t_x^{m!}$ is the identity. But $t_x^n=t_{nx}$ for any $n \ge 1$, and so we see that $m! \cdot x$ is the additive identity.
\end{proof}

\begin{lemma}
The set $\hat{M}$ has one element.
\end{lemma}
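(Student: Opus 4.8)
The plan is to exploit the tension between the two structures now present on the finite set $\hat{M}$: the multiplicative group law $\cdot$, with identity $1$, and the newly constructed operation $\oplus$. The preceding lemmas furnish, for $\oplus$, commutativity, associativity, cancellativity, the distributive law over $\cdot$, and an identity element, which I will denote $0$. The crucial point, and what distinguishes this case from \S\ref{s:a}, is that here the additive identity $0$ is an honest element of $\hat{M}$ and therefore possesses a multiplicative inverse.

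First I would record that $z \cdot 0 = 0$ for every $z \in \hat{M}$. Writing $a = z \cdot 0$ and using the distributive law together with $0 \oplus 0 = 0$ gives $a = z \cdot (0 \oplus 0) = a \oplus a$. Since also $a = a \oplus 0$, cancellativity applied to $a \oplus a = a \oplus 0$ yields $a = 0$. This is just the familiar fact that ``multiplication by zero annihilates,'' derived purely from the axioms at hand.

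Next I would force the collapse. Because $0 \in \hat{M}$ and $(\hat{M}, \cdot)$ is a group, the element $0$ has a multiplicative inverse $0^{-1}$, and by definition $0^{-1} \cdot 0 = 1$. But the identity just established gives $0^{-1} \cdot 0 = 0$, so $1 = 0$. Finally, for an arbitrary $x \in \hat{M}$ we have $x = x \cdot 1 = x \cdot 0 = 0$, so every element of $\hat{M}$ equals $0$; that is, $\hat{M}$ is a singleton, and hence $M$ is trivial.

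I do not expect a genuine obstacle once one notices that the additive zero is forced to be multiplicatively invertible, as this is precisely the incompatibility that a nontrivial field cannot tolerate. The only points requiring care are to keep the two operations and their two identities $0$ and $1$ cleanly separated, and to confirm that each property invoked---distributivity, cancellativity, and existence of the additive identity---has indeed been established in the preceding lemmas for all of $\hat{M}$ with no excluded elements, which is exactly the feature absent in the first case.
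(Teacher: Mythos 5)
Your proof is correct, but it takes a genuinely different route from the paper's. The paper argues directly from the defining formula $x \oplus y = x\, i(x/y)^{-1}$: substituting $y=0$ into $x \oplus 0 = x$ forces $i(x/0)=1$, hence $x = 0 \cdot i^{-1}(1)$ for every $x$, so all elements coincide with a single constant. You instead work purely axiomatically from the properties established in the preceding lemmas: distributivity and cancellativity give $z \cdot 0 = 0$ for all $z$, and since $0$ is an element of the multiplicative group $\hat{M}$ it is invertible, whence $1 = 0^{-1}\cdot 0 = 0$ and then $x = x \cdot 1 = x \cdot 0 = 0$ for all $x$. Your argument never touches the bijection $i$ and would apply to any finite multiplicative group carrying a commutative, cancellative, distributive operation with an additive identity, which makes it more robust and conceptually transparent; the paper's is a shorter one-line computation but is tied to the explicit formula for $\oplus$. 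Every ingredient you invoke --- distributivity and cancellativity valid on all of $\hat{M}$ with no excluded elements, and the existence of $0$ --- is indeed supplied by the preceding lemmas in this case, so there is no gap.
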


\begin{proof}
The equation $x=x \oplus 0$ exactly means $x=xi(x/0)^{-1}$; note that 0 is an element of the group $\hat{M}$, and so $0^{-1}$ is well-defined. We thus see that $i(x/0)=1$, and so $x=0 \cdot i^{-1}(1)$. Since this holds for all $x$, the result follows.
\end{proof}

\end{document}